\newtheorem{definition}{Definition}
\newtheorem{theorem}{Theorem}
\newtheorem{proposition}{Proposition}
\newtheorem{corollary}{Corollary}
\newtheorem{lemma}{Lemma}
\DeclareMathOperator{\GL}{GL}
\DeclareMathOperator{\SL}{SL}
\DeclareMathOperator{\Lie}{Lie}
\DeclareMathOperator{\Adj}{Ad}
\DeclareMathOperator{\aker}{ker}
\DeclareMathOperator{\adj}{ad}
\DeclareMathOperator{\Hom}{Hom}
\DeclareMathOperator{\vspan}{span}
\DeclareMathOperator{\Int}{Int}
\DeclareMathOperator{\Sym}{Sym}
\DeclareMathOperator{\Un}{U}
\DeclareMathOperator{\Map}{Map}
\begin{document}
\title{The Torus-Equivariant Cohomology of Nilpotent Orbits}
\author{Peter Crooks\thanks{Department of Mathematics. University of Toronto. Toronto, ON, Canada. \newline {\tt peter.crooks@utoronto.ca} \newline The author was supported by an NSERC CGS-D3 research grant.}}

 \maketitle
\begin{abstract}
We consider aspects of the geometry and topology of nilpotent orbits in finite-dimensional complex simple Lie algebras. In particular, we give the equivariant cohomologies of the regular and minimal nilpotent orbits with respect to the action of a maximal compact torus of the overall group in question.
\end{abstract}

\section{Introduction}\label{intro}
\subsection{Generalities}
Throughout, we let $G$ be a connected, simply-connected complex simple linear algebraic group. Let $K\subseteq G$ be a maximal compact subgroup, and fix a maximal torus $T\subseteq K$. Set $H:=T_{\mathbb{C}}$, a maximal torus of $G$. Denote by $\mathfrak{g}$, $\mathfrak{k}$, $\mathfrak{t}$, and $\mathfrak{h}$ the Lie algebras of $G$, $K$, $T$, and $H$, respectively. Let $W=N_K(T)/T=N_G(H)/H$ be the Weyl group. Also, let $\Adj:G\rightarrow\GL(\mathfrak{g})$ and $\adj:\mathfrak{g}\rightarrow\mathfrak{gl}(\mathfrak{g})$ denote the adjoint representations of $G$ and $\mathfrak{g}$, respectively. Let $\Delta\subseteq\Hom(T,\Un(1))=\Hom(H,\mathbb{C}^*)$ denote the resulting collection of roots of $\mathfrak{g}$ with respect to the adjoint representation of $T$. By fixing a Borel subgroup $B\subseteq G$ containing $H$, we specify collections $\Delta_{+},\Delta_{-}\subseteq\Delta$ of positive and negative roots, respectively. Let $\Pi\subseteq\Delta_{+}$ denote the resulting collection of simple roots.

Recall that a point $\xi\in\mathfrak{g}$ is called \textit{nilpotent} if the vector space endomorphism $\adj_{\xi}:\mathfrak{g}\rightarrow\mathfrak{g}$ is nilpotent. Recall also that the nilpotent cone is the closed subvariety $\mathcal{N}$ of $\mathfrak{g}$ consisting of the nilpotent elements. We call an adjoint $G$-orbit a \textit{nilpotent orbit} if it is contained in $\mathcal{N}$. As an orbit of an algebraic $G$-action, any nilpotent orbit is a smooth locally closed subvariety of $\mathfrak{g}$.

It is well-known that there exist only finitely many nilpotent orbits of $G$. Indeed, if $G=\SL_n(\mathbb{C})$, then one can use Jordan canonical forms to give an explicit indexing of the nilpotent orbits by the partitions of $n$.

Furthermore, the nilpotent orbits constitute an algebraic stratification of $\mathcal{N}$ (see \cite{RTCG}). In other words, we have the partial order on the set of nilpotent orbits given by $\Theta_1\leq\Theta_2$ if and only if $\Theta_1\subseteq\overline{\Theta_2}$ (the Zariski-closure of $\Theta_2$ in $\mathcal{N}$). Hence, $$\overline{\Theta}=\bigcup_{\Omega\leq\Theta}\Omega$$ for all nilpotent orbits $\Theta$.

It turns out that the set of nilpotent orbits has a unique maximal element, $\Theta_{\text{reg}}$, and a unique minimal non-zero element, $\Theta_{\text{min}}$. These distinguished orbits are called the \emph{regular} and \emph{minimal} nilpotent orbits, respectively. The former consists precisely of the regular nilpotent elements of $\frak{g}$, while the latter is the orbit of a root vector for a long root.

\subsection{Context}
The study of nilpotent orbits lies at the interface of algebraic geometry, representation theory, and symplectic geometry. Indeed, one has the famous Springer resolution $$\mu:T^*(G/B)\rightarrow\mathcal{N}$$ of the singular nilpotent cone (see \cite{RTCG}). The fibres of $\mu$ over a given nilpotent orbit $\Theta$ are isomorphic as complex varieties, and this isomorphism class is called the Springer fibre of $\Theta$. The Springer correspondence then gives a realization of the irreducible complex $W$-representations on the Borel-Moore homology groups of the Springer fibres (see \cite{RTCG}).

From the symplectic standpoint, we note that coadjoint $G$-orbits are canonically complex symplectic manifolds. Since the Killing form on $\frak{g}$ provides an isomorphism between the adjoint and coadjoint representations of $G$, it follows that adjoint $G$-orbits (and in particular, nilpotent $G$-orbits) are naturally complex symplectic manifolds.

Some attention has also been given to the matter of computing topological invariants of nilpotent orbits. In \cite{Collingwood}, Collingwood and McGovern compute the fundamental group of each nilpotent orbit in the classical Lie algebras. Also, Juteau's paper \cite{Juteau} gives the integral cohomology groups of the minimal nilpotent orbit in each of the finite-dimensional complex simple Lie algebras. Additionally, Biswas and Chatterjee compute $H^2(\Theta;\mathbb{R})$ for $\Theta$ any nilpotent orbit in a finite-dimensional complex simple Lie algebra (see their paper \cite{Biswas}).

Our contribution is a computation of the $T$-equivariant cohomology algebras of the $G$-orbits $\Theta_{\text{reg}}$ and $\Theta_{\text{min}}$. (To this end, $H_T^*(X)$ shall always denote the $T$-equivariant cohomology over $\mathbb{Q}$ of a $T$-manifold $X$.) We state our result below.

\begin{theorem}
\begin{itemize}
\item[(i)] $H_T^*(\Theta_{\text{reg}})\cong H^*(G/B;\mathbb{Q})$
\item[(ii)] Let $\alpha\in\Delta_{+}$ be the highest root, and let $\Xi:=\{\beta\in\Pi:\langle\alpha,\beta\rangle=0\}$. Let $W_{\Xi}$ be the subgroup of $W$ generated by the reflections $s_{\beta}$, $\beta\in\Xi$. Then, $H_T^*(\Theta_{\text{min}})$ is isomorphic to the quotient of
    \begin{align} \{& f\in\Map(W/W_{\Xi},H_{T}^*(\text{pt})): (w\cdot\beta)\vert(f([w])-f([ws_{\beta}]))\nonumber\\ & \forall{w\in W},\beta\in\Delta_{-},\text{ }\langle\alpha,\beta\rangle\neq 0\}\nonumber \end{align}
    by the ideal generated by the map $W/W_{\Xi}\rightarrow H_T^*(\text{pt})$, $[w]\mapsto w\cdot\alpha$.
\end{itemize}
\end{theorem}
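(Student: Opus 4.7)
For part (i), my plan is to identify $\Theta_{\text{reg}}\cong G/Z$, where $Z=Z_G(e_{\text{reg}})$ is the centraliser in $G$ of a regular nilpotent. Classically $Z=Z(G)\cdot U_e$ with $U_e$ a connected unipotent subgroup of dimension equal to the rank and $Z(G)$ finite, so $Z$ is rationally homotopy equivalent to $Z(G)$ and $H^*(BZ;\mathbb{Q})=\mathbb{Q}$. I would then invoke the standard fibration
\[
G/T \longrightarrow ET\times_T(G/Z) \longrightarrow EG\times_G(G/Z) = BZ
\]
comparing the $T$- and $G$-Borel constructions of the free $G$-space $G/Z$. Since the base has trivial rational cohomology and $Z(G)\subseteq T$ acts trivially on the fibre $G/T$, the Serre spectral sequence collapses to give $H_T^*(\Theta_{\text{reg}})\cong H^*(G/T;\mathbb{Q})$. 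Finally, $G/T\to G/B$ has contractible fibre $B/T$, so this equals $H^*(G/B;\mathbb{Q})$, proving (i).

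For part (ii), I would first present $\Theta_{\text{min}}$ as a $T$-equivariant $\mathbb{C}^*$-bundle. Let $P\subseteq G$ be the stabiliser of the line $\mathbb{C} e_\alpha\subseteq\mathfrak{g}$ under the adjoint action; the $\alpha$-grading of $\mathfrak{g}$ shows that $P$ is the standard parabolic whose Levi has simple roots precisely $\Xi$, and that $P$ acts on $\mathbb{C} e_\alpha$ through the character $\alpha$. Consequently
\[
\Theta_{\text{min}} \;\cong\; \mathcal{L}\setminus 0_{G/P}, \qquad \mathcal{L} := G\times_P\mathbb{C}_\alpha \longrightarrow G/P,
\]
via $[g,z]\mapsto z\cdot\Adj(g)e_\alpha$, and the associated equivariant Gysin sequence reads
\[
\cdots \to H_T^{k-2}(G/P) \xrightarrow{\,\cup\,e^T\,} H_T^k(G/P) \to H_T^k(\Theta_{\text{min}}) \to H_T^{k-1}(G/P) \to \cdots
\]
with $e^T := c_1^T(\mathcal{L})\in H_T^2(G/P)$.

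To extract the formula in (ii), I would apply GKM theory to $G/P$. The $T$-fixed locus is $\{[w]:w\in W/W_\Xi\}$, and the moment-graph edges are pairs $([w],[ws_\beta])$ with $s_\beta\notin W_\Xi$, carrying label $w\cdot\beta$. A standard argument using the dominance of the highest root shows $\Delta_\Xi = \{\gamma\in\Delta:\langle\alpha,\gamma\rangle = 0\}$, so the edge condition $s_\beta\notin W_\Xi$ is exactly $\langle\alpha,\beta\rangle\neq 0$, while restricting $\beta\in\Delta_-$ eliminates the redundancy $\beta\leftrightarrow-\beta$; this gives the GKM module displayed in (ii). Under the same embedding, $e^T|_{[w]}$ is the $T$-weight of the fibre $\mathcal{L}_{[w]}$, namely $w\cdot\alpha$. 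Each $w\cdot\alpha$ is a nonzero element of the integral domain $S(\mathfrak{t}^*)\cong H_T^*(\text{pt})$, so cupping with $e^T$ is componentwise injective and hence injective on the GKM subring; the Gysin sequence therefore splits into short exact sequences, yielding $H_T^*(\Theta_{\text{min}})\cong H_T^*(G/P)/(e^T)$, which is the quotient described in the theorem.

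The main technical obstacles are the two structural identifications that make the statement as clean as it is. First, collapsing the Gysin long exact sequence to a short one rests on the non-zero-divisor property of $e^T$, which uses equivariant formality of $G/P$ and injectivity of the GKM map. Second, the reduction of the moment-graph condition to the simple form ``$\langle\alpha,\beta\rangle\neq 0$'' is the root-system fact that the Levi of the minimal-orbit stabiliser is generated by the roots orthogonal to $\alpha$, which is specific to $\alpha$ being the highest root.
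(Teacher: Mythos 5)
Your proposal is correct. For part (ii) it follows essentially the same route as the paper: identify $\Theta_{\text{min}}$ with the complement of the zero section in the line bundle $G\times_{P_{\Xi}}\mathfrak{g}_{\alpha}\rightarrow G/P_{\Xi}$, compute $H_T^*(G/P_{\Xi})$ by GKM theory in the style of Guillemin--Holm--Zara, restrict the equivariant Euler class to the fixed points to obtain $w\cdot\alpha$, conclude it is a non-zero-divisor via injectivity of restriction to $(G/P_{\Xi})^T$, and split the Thom--Gysin sequence. The two technical points you flag are exactly the ones the paper addresses, the second by an $\mathfrak{sl}_2$-triple computation showing that for $\beta\in\Delta_{-}$ one has $[\mathfrak{g}_{\alpha},\mathfrak{g}_{\beta}]=\{0\}$ if and only if $\langle\alpha,\beta\rangle=0$. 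One small remark: since $P_{\Xi}\supseteq B$, the set $\Delta\setminus\Delta_{P_{\Xi}}$ already equals $\{\beta\in\Delta_{-}:\langle\alpha,\beta\rangle\neq 0\}$, so there is no $\beta\leftrightarrow-\beta$ redundancy to eliminate. For part (i), however, you take a genuinely different route. The paper builds an explicit $T$-equivariant diffeomorphism $\Theta_{\text{reg}}\cong K/Z(G)\times V$ from the Iwasawa decomposition $G=KAN$ together with $C_G(\eta)_0\subseteq N$, and then uses local freeness of the $T$-action on $K/Z(G)$ to get $H_T^*(\Theta_{\text{reg}})\cong H^*(T\backslash K;\mathbb{Q})\cong H^*(G/B;\mathbb{Q})$; you instead compare the $T$- and $G$-Borel constructions through the $G/T$-fibration and use $H^{>0}(BC_G(\eta);\mathbb{Q})=0$. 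Your version is sound, but the collapse of the Serre spectral sequence requires the monodromy of $\pi_1$ of the base, i.e.\ of $\pi_0(C_G(\eta))\cong Z(G)$, on $H^*(G/T;\mathbb{Q})$ to be trivial; the relevant action is right translation of $C_G(\eta)$ on $T\backslash G$, which is homotopically trivial because $Z(G)\subseteq T$ is central and the identity component is connected -- your phrase about $Z(G)$ acting trivially on the fibre gestures at this and should be made precise. The trade-off is that the paper's argument avoids spectral sequences and local coefficients entirely at the cost of the Iwasawa bookkeeping, while yours is shorter once that point is settled and applies verbatim to any homogeneous $G$-space whose $G$-equivariant rational cohomology is trivial.
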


\subsection{Structure of the Article}
Section \ref{regular} is devoted to an examination of the regular nilpotent orbit. Specifically, we establish a few facts concerning the structure of the $G$-stabilizer $C_G(\eta)$ of a point $\eta\in\Theta_{\text{reg}}$. We then give a new description of $\Theta_{\text{reg}}\cong G/C_G(\eta)$ as a $T$-manifold (see Theorem \ref{diffeomorphism}). This description is suitable for purposes of computing $H_T^*(\Theta_{\text{reg}})$.

Section \ref{minimal} treats the case of the minimal nilpotent orbit, but the approach differs considerably from that adopted when studying $\Theta_{\text{reg}}$. We begin by introducing a natural $\mathbb{C}^*$-action on nilpotent orbits. Via this action, we define $\mathbb{P}(\Theta_{\text{min}})$, a smooth closed subvariety of $\mathbb{P}(\frak{g})$. This variety has interesting properties beyond those materially relevant to computing $H_T^*(\Theta_{\text{min}})$. In particular, $\mathbb{P}(\Theta_{\text{min}})$ is naturally a symplectic manifold, and the $T$-action on $\Theta_{\text{min}}$ descends to a Hamiltonian action on $\mathbb{P}(\Theta_{\text{min}})$. Accordingly, we give an explicit description of $\mathbb{P}(\Theta_{\text{min}})^T$ (see \ref{Fixed}) and use it to find the moment polytope of $\mathbb{P}(\Theta_{\text{min}})$ (see \ref{Polytope}).

In \ref{Partial Flag}, we use GKM Theory to provide a description of $H_T^*(G/P)$, where $P\subseteq G$ is a parabolic subgroup containing $T$. This is done in recognition of the fact (which we prove in \ref{Description}) that $\mathbb{P}(\Theta_{\text{min}})$ is $G$-equivariantly isomorphic to $G/P_{\Xi}$, where $P_{\Xi}$ is the parabolic determined by $\Xi$.

It then remains to relate the graded algebras $H_T^*(G/P_{\Xi})$ and $H_T^*(\Theta_{\text{min}})$. This is achieved via the Thom-Gysin sequence in $T$-equivariant cohomology, which allows us to exhibit $H_T^*(\Theta_{\text{min}})$ as a quotient of $H_T^*(G/P_{\Xi})$. Indeed, we take the quotient of $H_T^*(G/P_{\Xi})$ by the ideal generated by the $T$-equivariant Euler class of the associated line bundle $G\times_{P_{\Xi}}\frak{g}_{\alpha}\rightarrow G/P_{\Xi}$.

\subsection*{Acknowledgements}
I would like to begin by thanking my doctoral thesis supervisor, Lisa Jeffrey. I have benefitted considerably from her time, attention, and support. I am also grateful to Faisal Al-Faisal, Jonathan Fisher, Allen Knutson, Eckhard Meinrenken, Robert Milson, and Daniel Rowe for fruitful conversations. Finally, I am grateful to the Natural Sciences and Engineering Research Council of Canada (NSERC) for the financial support I have received during the doctoral programme.
\section{The Regular Nilpotent Orbit}\label{regular}
Throughout this section, we may actually take $G$ to be semisimple. Now, recall that an element $\xi\in\mathfrak{g}$ is called \textit{regular} if the dimension of the Lie algebra centralizer $C_{\mathfrak{g}}(\xi)=\{X\in\mathfrak{g}:[X,\xi]=0\}$ coincides with the rank of $\mathfrak{g}$. The regular nilpotent elements of $\mathfrak{g}$ actually constitute $\Theta_{\text{reg}}$.

Let us construct a reasonably standard representative of $\Theta_{\text{reg}}$. Indeed, for each $\beta\in\prod$, choose a root vector $e_{\beta}\in\mathfrak{g}_{\beta}\setminus\{0\}$. Consider the nilpotent element $$\eta:=\sum_{\beta\in\prod} e_{\beta}.$$ In \cite{KOST}, Kostant proved that $\eta\in\Theta_{\text{reg}}$. Furthermore, one can easily prove that $C_{\mathfrak{g}}(\eta)$ belongs to the positive nilpotent subalgebra $\mathfrak{n}_+:=\bigoplus_{\beta\in\Delta^+}\mathfrak{g}_{\beta}$.

Let $C_G(\eta)=\{g\in G:\Adj_g(\eta)=\eta\}$ be the $G$-stabilizer of $\eta$. This gives an isomorphism $\Theta_{\text{reg}}\cong G/C_G(\eta)$ of complex $G$-varieties, where the action of $C_G(\eta)$ on $G$ is given by $x:g\mapsto gx^{-1}$, $x\in C_G(\eta)$, $g\in G$.

Having realized $\Theta_{\text{reg}}$ in this way, we turn our attention to $C_G(\eta)$. To this end, we recall that the inner automorphism group (or adjoint group) of $\mathfrak{g}$ is the subgroup $\Int(\mathfrak{g})$ of $\GL(\mathfrak{g})$ generated by all automorphisms of the form $e^{\adj_{\xi}}$, $\xi\in\mathfrak{g}$. Since $\Adj_{\exp(\xi)}=e^{\adj_{\xi}}$ for all $\xi\in\mathfrak{g}$, it follows that $\Int(\mathfrak{g})$ is precisely the image of the adjoint representation $\Adj:G\rightarrow\GL(\mathfrak{g})$. Hence, $\Int(\mathfrak{g})$ is a connected Zariski-closed subgroup of $\GL(\mathfrak{g})$.

We shall require the below theorem concerning the structure of the $\Int(\mathfrak{g})$-stabilizer $C_{\Int(\mathfrak{g})}(\eta)$ of $\eta$.

\begin{theorem}\label{stabthm}
The centralizer $C_{\Int(\mathfrak{g})}(\eta)$ is a connected abelian unipotent subgroup of $\Int(\mathfrak{g})$.
\end{theorem}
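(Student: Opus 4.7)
The plan is to identify the Lie algebra of $C_{\Int(\mathfrak{g})}(\eta)$, construct its identity component explicitly via an exponential map from $C_{\mathfrak{g}}(\eta)$, and then separately address global connectedness by passing to a simply connected cover.

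First I would compute the Lie algebra. Since $\mathfrak{g}$ is semisimple, $\adj:\mathfrak{g}\to\mathfrak{gl}(\mathfrak{g})$ is injective, and its image is $\Lie(\Int(\mathfrak{g}))$. Under this identification, the Lie algebra of any $\Int(\mathfrak{g})$-stabilizer agrees with the corresponding $\mathfrak{g}$-centralizer; in particular $\Lie(C_{\Int(\mathfrak{g})}(\eta))=C_{\mathfrak{g}}(\eta)$. The text has already noted $C_{\mathfrak{g}}(\eta)\subseteq\mathfrak{n}_+$, so every $X\in C_{\mathfrak{g}}(\eta)$ is $\adj$-nilpotent; by regularity of $\eta$ we have $\dim C_{\mathfrak{g}}(\eta)=\text{rank}(\mathfrak{g})$; and Kostant \cite{KOST} guarantees that $C_{\mathfrak{g}}(\eta)$ is abelian.

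Second I would define $\phi:C_{\mathfrak{g}}(\eta)\to\Int(\mathfrak{g})$ by $\phi(X)=e^{\adj_X}$. This is a polynomial morphism of varieties because $\adj_X$ is nilpotent, and because $C_{\mathfrak{g}}(\eta)$ is abelian the operators $\adj_X$ and $\adj_Y$ commute for all $X,Y\in C_{\mathfrak{g}}(\eta)$, whence $\phi(X+Y)=\phi(X)\phi(Y)$; thus $\phi$ is an algebraic group homomorphism from the additive vector group $(C_{\mathfrak{g}}(\eta),+)$ to $\Int(\mathfrak{g})$. Its image lies in $C_{\Int(\mathfrak{g})}(\eta)$ because $[X,\eta]=0$ forces $e^{\adj_X}(\eta)=\eta$. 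The image is thus connected, abelian (as a homomorphic image of an abelian group), and unipotent (each $e^{\adj_X}$ is a unipotent operator on $\mathfrak{g}$). Since the differential of $\phi$ at the origin is $\adj|_{C_{\mathfrak{g}}(\eta)}$, which is injective, the image has dimension $\text{rank}(\mathfrak{g})=\dim C_{\Int(\mathfrak{g})}(\eta)$, and one concludes $\im(\phi)=C_{\Int(\mathfrak{g})}(\eta)^\circ$.

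The main obstacle, and the step where the choice of the adjoint group $\Int(\mathfrak{g})$ rather than $G$ itself becomes essential, is to upgrade this identity component to the full centralizer. Indeed, for $G=\SL_2(\mathbb{C})$ the centralizer of a regular nilpotent has a non-trivial component group $\{\pm I\}$, which only disappears upon passage to $\PSL_2(\mathbb{C})=\Int(\mathfrak{sl}_2)$. My plan is to fix a simply connected cover $\pi:\tilde{G}\twoheadrightarrow\Int(\mathfrak{g})$ with finite central kernel $Z(\tilde{G})$, observe that any lift of an element of $C_{\Int(\mathfrak{g})}(\eta)$ centralizes $\eta$ in $\tilde{G}$ so that $C_{\Int(\mathfrak{g})}(\eta)=\pi(C_{\tilde{G}}(\eta))$, and then invoke the classical structural result (see \cite{KOST}, or Theorem 6.1.3 of \cite{Collingwood}) that $C_{\tilde{G}}(\eta)=Z(\tilde{G})\cdot C_{\tilde{G}}(\eta)^\circ$ for regular nilpotent $\eta$ in a simply connected semisimple group. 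Since $\pi$ kills $Z(\tilde{G})$, the image $C_{\Int(\mathfrak{g})}(\eta)=\pi(C_{\tilde{G}}(\eta)^\circ)$ is connected, completing the proof.
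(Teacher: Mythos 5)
Your proposal is correct, but you should be aware that the paper does not actually prove this theorem: its entire ``proof'' is the citation to Theorem 2.6 of Onishchik--Vinberg \cite{LGLA3}. Your argument therefore does genuinely more work, and the parts you prove, you prove cleanly. The identification $\Lie(C_{\Int(\mathfrak{g})}(\eta))=\adj(C_{\mathfrak{g}}(\eta))$ is valid (stabilizers are smooth in characteristic zero), and the exponential homomorphism $X\mapsto e^{\adj_X}$ out of the additive group of the abelian, $\adj$-nilpotent algebra $C_{\mathfrak{g}}(\eta)$, together with the closedness of images of algebraic group morphisms and the dimension count $\dim C_{\mathfrak{g}}(\eta)=\mathrm{rank}(\mathfrak{g})$, gives a self-contained derivation of the abelian and unipotent assertions, modulo Kostant's theorem that the centralizer of a regular element is abelian. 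The one caveat is the connectedness step: the fact you invoke, $C_{\tilde{G}}(\eta)=Z(\tilde{G})\cdot C_{\tilde{G}}(\eta)^{\circ}$ for $\tilde{G}$ simply connected, is --- given your (correct) observation that $\pi$ identifies $C_{\Int(\mathfrak{g})}(\eta)$ with $C_{\tilde{G}}(\eta)/Z(\tilde{G})$ --- literally equivalent to the connectedness of $C_{\Int(\mathfrak{g})}(\eta)$, i.e., to the triviality of the component group of the regular orbit. So the covering-group manoeuvre does not reduce the difficulty; for the crux of the theorem you, like the author, are deferring to the literature. That is legitimate (the statement is classical and can be extracted from the Levi decomposition of centralizers in \cite{Collingwood} together with the observation that the reductive part of $C_{\tilde{G}}(\eta)$ is central because $C_{\mathfrak{g}}(\eta)\subseteq\mathfrak{n}_+$ contains no nonzero semisimple elements), but it should be flagged as the real content rather than as a routine final step.
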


This is Theorem 2.6 in \cite{LGLA3}.

We note that a connected unipotent complex linear algebraic group is isomorphic to affine space as a variety. For our purposes, the relevant observation is that $C_{\Int(\mathfrak{g})}(\eta)$ is isomorphic as a complex manifold to $\mathbb{C}^n$ for some $n$.

\begin{proposition}\label{stabcentre}
The inclusion of the centre $Z(G)\hookrightarrow C_G(\eta)$ is a homotopy-equivalence.
\end{proposition}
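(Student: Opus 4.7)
The plan is to exploit the adjoint representation $\Adj \colon G \to \Int(\mathfrak{g})$ to relate $C_G(\eta)$ to the already well-understood group $C_{\Int(\mathfrak{g})}(\eta)$. Since $Z(G)$ is by definition the kernel of $\Adj$, and since $Z(G)$ acts trivially on $\mathfrak{g}$ and therefore fixes $\eta$, we have $Z(G)\subseteq C_G(\eta)$. Moreover, if $g\in G$ satisfies $\Adj_g\in C_{\Int(\mathfrak{g})}(\eta)$, then $\Adj_g(\eta)=\eta$, so $g\in C_G(\eta)$; hence $\Adj^{-1}(C_{\Int(\mathfrak{g})}(\eta))=C_G(\eta)$. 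Combining these observations with the surjectivity of $\Adj\colon G\to\Int(\mathfrak{g})$, I obtain a short exact sequence of complex Lie groups
\begin{equation*}
1\longrightarrow Z(G)\longrightarrow C_G(\eta)\xrightarrow{\;\Adj\;} C_{\Int(\mathfrak{g})}(\eta)\longrightarrow 1.
\end{equation*}

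Next I would observe that since $G$ is simple, its centre $Z(G)$ is a finite (in particular, discrete) group, so the quotient map $C_G(\eta)\to C_{\Int(\mathfrak{g})}(\eta)$ is a principal $Z(G)$-bundle in which the fibre is finite and discrete; i.e., it is a finite covering map. Now invoke Theorem \ref{stabthm} together with the remark following it: $C_{\Int(\mathfrak{g})}(\eta)$ is a connected unipotent complex algebraic group, hence biholomorphic to some $\mathbb{C}^n$ and in particular contractible (and simply connected). A covering of a simply connected space is trivial, so there is a homeomorphism
\begin{equation*}
C_G(\eta)\;\cong\; Z(G)\times C_{\Int(\mathfrak{g})}(\eta)\;\cong\; Z(G)\times\mathbb{C}^n
\end{equation*}
under which the inclusion $Z(G)\hookrightarrow C_G(\eta)$ corresponds to $z\mapsto (z,0)$.

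The proposition then follows, since the projection onto $Z(G)$ provides a deformation retraction of $C_G(\eta)$ onto its centre. The main (minor) obstacle is the bookkeeping needed to verify the short exact sequence — specifically, confirming surjectivity of the induced map $C_G(\eta)\to C_{\Int(\mathfrak{g})}(\eta)$ and identifying its kernel — but once that is in place, finiteness of $Z(G)$ combined with the contractibility assertion of Theorem \ref{stabthm} delivers the homotopy equivalence essentially for free.
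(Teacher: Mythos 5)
Your proposal is correct and follows essentially the same route as the paper: both exhibit $C_G(\eta)$ as a bundle over the contractible group $C_{\Int(\mathfrak{g})}(\eta)$ with fibre $Z(G)$, conclude triviality of the bundle, and deduce the homotopy equivalence. The only cosmetic difference is that you phrase the triviality via covering-space theory (using finiteness of $Z(G)$) rather than the general fact that a fibre bundle over a contractible base is trivial.
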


\begin{proof}
Note that $\phi:C_G(\eta)\rightarrow C_{\Int(\mathfrak{g})}(\eta)$, $g\mapsto\Adj_g$, is a surjective Lie group morphism. Since $G$ is connected, $Z(G)$ is the kernel of the adjoint representation, and hence is also the kernel of $\phi$. This yields a fibre bundle $$Z(G)\rightarrow C_G(\eta)\xrightarrow{\phi} C_{\Int(\mathfrak{g})}(\eta).$$ Since the base space $C_{\Int(\mathfrak{g})}(\eta)$ is contractible, our bundle is trivial. Noting that the inclusion of a fibre in a trivial bundle over $C_{\Int(\mathfrak{g})}(\eta)$ is a homotopy-equivalence, the inclusion $Z(G)\hookrightarrow C_G(\eta)$ is also a homotopy-equivalence.
\end{proof}

An immediate corollary of Proposition \ref{stabcentre} is the existence of an isomorphism of graded $\mathbb{Z}$-algebras between the $G$-equivariant cohomology $H_G^{*}(\Theta_{\text{reg}};\mathbb{Z})$ of the regular nilpotent orbit and the group cohomology $H^{*}_{\text{gp}}(Z(G);\mathbb{Z})$ of the finite group $Z(G)$.

\begin{corollary}
$H_G^{*}(\Theta_{\text{reg}};\mathbb{Z})\cong H^{*}_{\text{gp}}(Z(G);\mathbb{Z})$
\end{corollary}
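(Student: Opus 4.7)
The plan is to go through the classifying space of the stabilizer and exploit the homotopy equivalence provided by Proposition \ref{stabcentre}. The three-step chain I have in mind is
\[
H_G^*(\Theta_{\text{reg}};\mathbb{Z})\;\cong\;H^*(BC_G(\eta);\mathbb{Z})\;\cong\;H^*(BZ(G);\mathbb{Z})\;\cong\;H_{\text{gp}}^*(Z(G);\mathbb{Z}),
\]
and each isomorphism is essentially formal given what has been established.

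First I would invoke the identification $\Theta_{\text{reg}}\cong G/C_G(\eta)$ (noted at the start of Section \ref{regular}) and recall that for a closed subgroup $H\subseteq G$ acting on $G/H$ by left translation, the Borel construction gives
\[
EG\times_G (G/H)\;=\;EG/H,
\]
which is a model for $BH$. Applied to $H=C_G(\eta)$ this yields $H_G^*(\Theta_{\text{reg}};\mathbb{Z})\cong H^*(BC_G(\eta);\mathbb{Z})$.

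Next, from Proposition \ref{stabcentre} the inclusion $Z(G)\hookrightarrow C_G(\eta)$ is a homotopy equivalence, and I want to upgrade this to an equivalence of classifying spaces. The cleanest way is to note that $Z(G)$ is a (closed, in fact central) subgroup of $C_G(\eta)$, so the natural map
\[
EG/Z(G)\;\longrightarrow\;EG/C_G(\eta)
\]
is a fibre bundle with fibre $C_G(\eta)/Z(G)$. By the proof of Proposition \ref{stabcentre} this fibre is homeomorphic to $C_{\Int(\mathfrak{g})}(\eta)$, hence contractible, so the map is a homotopy equivalence, which gives $BZ(G)\simeq BC_G(\eta)$ and therefore agreement of cohomology rings.

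Finally, since $G$ is semisimple (we may take $G$ semisimple throughout Section \ref{regular}), its centre $Z(G)$ is finite, hence a discrete group, so $BZ(G)$ is a $K(Z(G),1)$ and by definition $H^*(BZ(G);\mathbb{Z})=H_{\text{gp}}^*(Z(G);\mathbb{Z})$. Composing the three isomorphisms yields the claim. The only non-routine ingredient is the contractibility of $C_G(\eta)/Z(G)$ used in the middle step, but this has been supplied by Theorem \ref{stabthm} and Proposition \ref{stabcentre}; so I anticipate no real obstacle beyond writing the Borel-construction argument carefully.
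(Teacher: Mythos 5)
Your proposal is correct and follows essentially the same route as the paper: identify $H_G^*(G/C_G(\eta))$ with $H^*(EG/C_G(\eta))$ via the Borel construction, pass to $EG/Z(G)$ using the fibre bundle with contractible fibre $C_G(\eta)/Z(G)$ supplied by Proposition \ref{stabcentre}, and identify $H^*(BZ(G))$ with group cohomology since $Z(G)$ is finite. No substantive differences.
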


\begin{proof}
Recall that $$H_G^{*}(\Theta_{\text{reg}};\mathbb{Z})\cong H_G^{*}(G/C_G(\eta);\mathbb{Z})=H^{*}((G/C_G(\eta))_G;\mathbb{Z}),$$ where $(G/C_G(\eta))_G$ is the quotient of $EG\times(G/C_G(\eta))$ by the diagonal action of $G$. This quotient is homeomorphic to $EG/C_G(\eta)$, and hence $$H_G^{*}(\Theta_{\text{reg}};\mathbb{Z})\cong H^{*}(EG/C_G(\eta);\mathbb{Z}).$$

By Proposition \ref{stabcentre}, $EG/Z(G)\rightarrow EG/C_G(\eta)$ is a fibre bundle with contractible fibre $C_G(\eta)/Z(G)$. Hence, $$H^{*}(EG/C_G(\eta);\mathbb{Z})\cong H^{*}(EG/Z(G);\mathbb{Z}).$$ However, we may take $EG/Z(G)$ to be the classifying space $BZ(G)$, whose singular cohomology coincides with the group cohomology $H^{^*}_{\text{gp}}(Z(G);\mathbb{Z})$. This completes the proof.
\end{proof}

\begin{corollary}
\begin{itemize}
\item[(i)] There is a natural complex Lie group isomorphism $C_G(\eta)_0\cong C_{\Int(\mathfrak{g})}(\eta)$, where $C_G(\eta)_0$ is the identity component of $C_G(\eta)$.
\item[(ii)] There is a natural central extension $$1\rightarrow Z(G)\rightarrow C_G(\eta)\rightarrow C_G(\eta)_0\rightarrow 1,$$ and the inclusion $C_G(\eta)_0\rightarrow C_G(\eta)$ is a splitting. In particular, $C_G(\eta)$ is the internal direct product $Z(G)\times C_G(\eta)_0$.
\end{itemize}
\end{corollary}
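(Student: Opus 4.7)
The plan is to upgrade the fibre bundle of Proposition \ref{stabcentre} to a splitting at the Lie group level. Since $G$ is semisimple, its centre $Z(G)$ is finite, so the map
\[
\phi: C_G(\eta)\to C_{\Int(\mathfrak{g})}(\eta),\qquad g\mapsto \Adj_g,
\]
is a covering map of complex Lie groups with deck group $Z(G)$. Theorem \ref{stabthm} tells us that $C_{\Int(\mathfrak{g})}(\eta)$ is a connected unipotent group, hence diffeomorphic to some $\mathbb{C}^n$ and in particular simply connected. I would use this to conclude that the covering is trivial: $C_G(\eta)$ is a disjoint union $\bigsqcup_{z\in Z(G)} z\cdot C_G(\eta)_0$ of $|Z(G)|$ open sheets, each mapped homeomorphically by $\phi$ onto the base.

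For part (i), I would then observe that the sheet through the identity is $C_G(\eta)_0$ itself, so $\phi|_{C_G(\eta)_0}:C_G(\eta)_0\to C_{\Int(\mathfrak{g})}(\eta)$ is a bijective morphism of complex Lie groups, hence the desired complex Lie group isomorphism. Naturality is clear since the construction uses only $\Adj$.

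For part (ii), I would define the projection $\pi:=\bigl(\phi|_{C_G(\eta)_0}\bigr)^{-1}\circ\phi:C_G(\eta)\to C_G(\eta)_0$. Its kernel is $\ker\phi = Z(G)$, yielding the asserted central extension, and the inclusion $C_G(\eta)_0\hookrightarrow C_G(\eta)$ is a section of $\pi$ because $\pi$ restricts to the identity on $C_G(\eta)_0$. Since $Z(G)$ is central in $C_G(\eta)$ and meets $C_G(\eta)_0$ only in $\{e\}$ (by injectivity of $\phi|_{C_G(\eta)_0}$), the split central extension is internally the direct product $Z(G)\times C_G(\eta)_0$.

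The only step requiring genuine geometric input is the triviality of the covering $\phi$, and this is forced immediately by the simple connectedness of $C_{\Int(\mathfrak{g})}(\eta)$ coming from Theorem \ref{stabthm}. Everything remaining amounts to a formal rearrangement of the short exact sequence already established in Proposition \ref{stabcentre}.
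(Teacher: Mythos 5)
Your proof is correct and follows essentially the same route as the paper's: both arguments reduce to the fact that $\phi:C_G(\eta)\to C_{\Int(\mathfrak{g})}(\eta)$ is a trivial covering because Theorem \ref{stabthm} makes the base contractible (hence simply connected), so that $\phi$ restricts to an isomorphism on the identity component and the extension splits formally. The only cosmetic difference is that the paper extracts the component structure from the $\pi_0$-isomorphism induced by the homotopy equivalence of Proposition \ref{stabcentre}, whereas you invoke covering-space theory directly; the underlying topological input is identical.
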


\begin{proof}
Since $Z(G)\rightarrow C_G(\eta)$ is a homotopy equivalence, it induces a group isomorphism $$\pi_0(Z(G))\xrightarrow{\cong}\pi_0(C_G(\eta))\cong C_G(\eta)/C_G(\eta)_0.$$ Hence, if $\omega\in C_G(\eta)/Z(G)$ is a coset, then there exists a unique $g\in C_G(\eta)_0$ for which $[g]=\omega$. Now, recall that $C_G(\eta)/Z(G)\rightarrow C_{\Int(\mathfrak{g})}(\eta)$, $[g]\mapsto\Adj_g$, is an isomorphism.  Hence, if $f\in C_{\Int(\mathfrak{g})}(\eta)$, then there exists a unique $g\in C_G(\eta)_0$ for which $\Adj_g=f$. Accordingly, $\varphi:C_{\Int(\mathfrak{g})}(\eta)\rightarrow C_G(\eta)_0$, $\Adj_g\mapsto g$, $g\in C_G(\eta)_0$, is a well-defined complex Lie group isomorphism.

For the second part, note that one always has the central extension $$1\rightarrow Z(G)\rightarrow C_G(\eta)\xrightarrow{\pi} C_{\Int(\mathfrak{g})}(\eta)\rightarrow 1,$$ where $\pi:C_G(\eta)\rightarrow C_{\Int(\mathfrak{g})}(\eta)$ is the projection map. By replacing $C_{\Int(\mathfrak{g})}(\eta)$ with the isomorphic copy $C_G(\eta)_0$ and setting $\psi:=\varphi\circ\pi:C_G(\eta)\rightarrow C_G(\eta)_0$, we obtain the central extension $$1\rightarrow Z(G)\rightarrow C_G(\eta)\xrightarrow{\psi} C_G(\eta)_0\rightarrow 1.$$ Note that the inclusion $C_G(\eta)_0\rightarrow C_G(\eta)$ splits this sequence.
\end{proof}

\begin{theorem}\label{diffeomorphism}
The regular nilpotent orbit $\Theta_{\text{reg}}$ is $T$-equivariantly diffeomorphic to a product $K/Z(G)\times V$, where $V$ is a finite-dimensional real vector space on which $T$ acts trivially.
\end{theorem}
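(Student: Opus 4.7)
The plan is to combine the Iwasawa decomposition of $G$ with the structural results already established for $C_G(\eta)$. Recall we have at our disposal: $C_G(\eta) = Z(G)\times C_G(\eta)_0$ as an internal direct product; $C_G(\eta)_0\cong C_{\Int(\mathfrak{g})}(\eta)$ is connected abelian unipotent; $C_{\mathfrak{g}}(\eta)\subseteq \mathfrak{n}_+$ and therefore $C_G(\eta)_0\subseteq N_+$; and $Z(G)\subseteq T\subseteq K$, since $Z(G)$ is contained in every maximal torus. Setting $A := \exp(i\mathfrak{t})$, the Iwasawa decomposition realises the multiplication map $K\times A\times N_+\to G$ as a diffeomorphism.

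The first substantive step is to track how three actions behave under this diffeomorphism. For $t\in T\subseteq K$, left multiplication gives $t(kan) = (tk)an$, which is already in Iwasawa form, so the left $T$-action touches only the $K$-factor. For $n'\in C_G(\eta)_0\subseteq N_+$, right multiplication gives $(kan)n' = ka(nn')$, so right $C_G(\eta)_0$ affects only the $N_+$-factor. For $z\in Z(G)\subseteq T\subseteq K$, centrality gives $(kan)z^{-1} = (kz^{-1})an$, so right $Z(G)$ affects only the $K$-factor. Since left translation by $T$ on $K$ commutes with right translation by $Z(G)$ on $K$, passing to the quotient by $C_G(\eta) = Z(G)\cdot C_G(\eta)_0$ yields a $T$-equivariant diffeomorphism
\[
\Theta_{\text{reg}} \cong (K/Z(G))\times A\times (N_+/C_G(\eta)_0),
\]
with $T$ acting by left multiplication on the first factor and trivially on the other two.

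It then remains to identify $V := A\times (N_+/C_G(\eta)_0)$ with a finite-dimensional real vector space carrying the trivial $T$-action. The exponential map gives $A\cong i\mathfrak{t}$ as smooth manifolds. For the second factor, I would invoke the standard fact that, for a simply-connected nilpotent Lie group $N$ and a closed connected subgroup $H$, any choice of vector space complement $\mathfrak{m}\subseteq\Li(N)$ to $\Li(H)$ produces a diffeomorphism $\mathfrak{m}\xrightarrow{\cong} N/H$ via $X\mapsto [\exp X]$; this rests on the global diffeomorphism $\exp:\Li(N)\xrightarrow{\cong} N$ and the Baker-Campbell-Hausdorff formula, which terminates in the nilpotent setting. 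Applying this to $N_+$ and $C_G(\eta)_0$ supplies the required diffeomorphism. The main obstacle is exactly this last identification, in which one must be careful to isolate the smooth (rather than group-theoretic) structure of $N_+/C_G(\eta)_0$; once it is in hand, the remainder of the argument is the bookkeeping of Iwasawa components described above.
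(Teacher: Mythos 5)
Your proposal is correct and follows essentially the same route as the paper: both push the right $C_G(\eta)=Z(G)\times C_G(\eta)_0$ action and the left $T$-action through the Iwasawa diffeomorphism $K\times A\times N\to G$, obtaining $K/Z(G)\times A\times N/C_G(\eta)_0$, and then identify the last factor with a vector space using the standard fact about quotients of simply-connected nilpotent Lie groups by closed connected subgroups. The only difference is cosmetic: you sketch the proof of that last fact (via a linear complement and $X\mapsto[\exp X]$), where the paper simply cites Onishchik--Vinberg.
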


\begin{proof}
Earlier, we noted that the centralizer $C_{\mathfrak{g}}(\eta)$ belonged to the nilpotent subalgebra $\mathfrak{n}_+$. Letting $N$ denote the connected closed subgroup of $G$ with Lie algebra $\mathfrak{n}_+$, this fact implies the inclusion $C_G(\eta)_0\subseteq N$.

Letting $A$ denote the connected closed subgroup of $G$ with (real) Lie algebra $i\mathfrak{t}\subseteq\mathfrak{g}$, the Iwasawa decomposition gives a diffeomorphism $\Phi:K\times A\times N\xrightarrow{\cong} G$, $$(k,a,n)\mapsto kan.$$ Now, let $Z(G)\times C_G(\eta)_0$ act on $G$ via the $C_G(\eta)$-action on $G$ and the isomorphism $Z(G)\times C_G(\eta)_0\rightarrow C_G(\eta)$. Explicitly, this action is given by $$(z,h):g\mapsto g(zh)^{-1},$$ $(z,h)\in Z(G)\times C_G(\eta)_0$, $g\in G$. We enlarge this to a $T\times(Z(G)\times C_G(e)_0)$-action with $T$ acting on $G$ by left-multiplication. Note that $\Phi$ is then a $T\times (Z(G)\times C_G(\eta)_0)$-manifold isomorphism for the action of $T\times (Z(G)\times C_G(\eta)_0)$ on $K\times A\times N$ defined by $$(t,z,h):(k,a,n)\mapsto(tkz^{-1},a,nh^{-1}),$$ $(t,z,h)\in T\times (Z(G)\times C_G(\eta)_0)$, $(k,a,n)\in K\times A\times N$. It follows that $$\Theta_{\text{reg}}\cong G/C_G(\eta)=G/(Z(G)\times C_G(\eta)_0)$$ is $T$-equivariantly diffeomorphic to the quotient $$(K\times A\times N)/(Z(G)\times C_G(\eta)_0),$$ endowed with its residual $T$-action. The latter is clearly $T$-equivariantly diffeomorphic to $$K/Z(G)\times A\times N/C_G(\eta)_0,$$ where $T$ acts by left-multiplication on the factor $K/Z(G)$ and trivially on the factors $A$ and $N/C_G(\eta)_0$. Since $A$ is diffeomorphic to its Lie algebra, it remains only to establish that $N/C_G(\eta)_0$ is diffeomorphic to a real vector space. However, this follows from the fact that a quotient of a nilpotent connected simply-connected Lie group by a connected closed subgroup is diffeomorphic to a real vector space (see \cite{LGLA3}).
\end{proof}

\begin{corollary}\label{flagvariety}
There is an isomorphism $H_T^*(\Theta_{\text{reg}})\cong H^*(G/B;\mathbb{Q})$.
\end{corollary}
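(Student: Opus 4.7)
The plan is to chain together three identifications that collectively reduce the computation to the rational cohomology of $G/B$. First, I would invoke Theorem \ref{diffeomorphism} to rewrite $\Theta_{\text{reg}} \simeq_T K/Z(G) \times V$, where $V$ is a real vector space with trivial $T$-action. Since the straight-line contraction to $0 \in V$ is $T$-equivariant, the product $K/Z(G) \times V$ is $T$-equivariantly homotopy equivalent to $K/Z(G)$, so $H_T^*(\Theta_{\text{reg}}) \cong H_T^*(K/Z(G))$.

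Next I would peel off the finite central subgroup. The quotient $K \to K/Z(G)$ is a principal $Z(G)$-bundle, and the right $Z(G)$-action on $K$ commutes with the left $T$-action used in the Borel construction; consequently $(K/Z(G))_T \cong K_T / Z(G)$. As $Z(G)$ is finite and we work over $\mathbb{Q}$, transfer gives $H_T^*(K/Z(G);\mathbb{Q}) \cong H_T^*(K;\mathbb{Q})^{Z(G)}$. The left $T$-action on $K$ is free, so $H_T^*(K;\mathbb{Q}) \cong H^*(K/T;\mathbb{Q})$. The residual $Z(G)$-action on $K/T$ sends $kT$ to $kzT = kT$ because $Z(G) \subseteq T$, so the action is trivial and the invariant subspace is all of $H^*(K/T;\mathbb{Q})$.

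Finally, I would invoke the classical diffeomorphism $K/T \xrightarrow{\cong} G/B$ (the compact group $K$ acts transitively on $G/B$ with stabilizer $T$; this is a direct consequence of the Iwasawa decomposition $G = KAN$ already used in the proof of Theorem \ref{diffeomorphism}), yielding $H^*(K/T;\mathbb{Q}) \cong H^*(G/B;\mathbb{Q})$. Concatenating the isomorphisms produces the desired statement. The main subtlety I anticipate is the $Z(G)$-bookkeeping: commutativity with the $T$-action is needed to identify $(K/Z(G))_T$ with $K_T/Z(G)$, and triviality on $K/T$ is needed to avoid actually computing invariants. Everything else in the argument is essentially routine.
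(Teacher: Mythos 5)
Your proposal is correct and follows essentially the same route as the paper: reduce to $H_T^*(K/Z(G))$ via Theorem \ref{diffeomorphism}, then use the freeness of the $T$-action modulo the finite central subgroup $Z(G)$ (which the paper packages as ``locally free action plus $\mathbb{Q}$-coefficients'' and you unpack as transfer plus invariants, with $Z(G)$ acting trivially on $T\backslash K$ since it is central and contained in $T$) to land on $H^*(K/T;\mathbb{Q})\cong H^*(G/B;\mathbb{Q})$. The only cosmetic issue is the left-versus-right coset bookkeeping ($T\backslash K$ rather than $K/T$ for the quotient by the left $T$-action), which does not affect the argument.
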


\begin{proof}
By Theorem \ref{diffeomorphism}, $H_T^*(\Theta_{\text{reg}})\cong H_T^*(K/Z(G))$. Since $Z(G)$ is a finite group, the action of $T$ on $K/Z(G)$ is locally free, and $$H_T^*(K/Z(G))\cong H^*(T\backslash K/Z(G);\mathbb{Q})\cong H^*(T\backslash K;\mathbb{Q})\cong H^*(G/B;\mathbb{Q}).$$
\end{proof}

\section{The Minimal Nilpotent Orbit}\label{minimal}
\subsection{A $\mathbb{C}^*$-Action on Nilpotent Orbits}\label{Action}
Fix a non-zero nilpotent orbit $\Theta\subseteq\mathfrak{g}$ and a point $\xi\in\Theta$. By the Jacobson-Morozov Theorem, there exist a semisimple element $h\in\mathfrak{g}$ and a nilpotent element $f\in\mathfrak{g}$ for which $(\xi,h,f)$ is an $\mathfrak{sl}_2(\mathbb{C})$-triple with nil-positive element $\xi$. We note that for all $\lambda\in\mathbb{C}$, $$\Adj_{\exp(\lambda h)}(\xi)=e^{\adj_{\lambda h}}(\xi)=e^{2\lambda}\xi.$$ From this calculation, it follows that $\Theta$ is invariant under the scaling action of $\mathbb{C}^*$ on $\mathfrak{g}$. Accordingly, we introduce $$\mathbb{P}(\Theta):=\Theta/\mathbb{C}^{*},$$ a smooth quasi-projective subvariety of $\mathbb{P}(\mathfrak{g})$. Since the actions of $G$ and $\mathbb{C}^*$ on $\mathfrak{g}$ commute, the $G$-action descends to the quotients $\mathbb{P}(\Theta)$ and $\mathbb{P}(\mathfrak{g})$.

We remark that $\mathbb{P}(\Theta)$ has a rich geometric structure. To see this, choose a $K$-invariant Hermitian inner product $\langle,\rangle:\mathfrak{g}\otimes_{\mathbb{R}}\mathfrak{g}\rightarrow\mathbb{C}$. This yields a $K$-invariant K\"{a}hler structure on $\mathbb{P}(\mathfrak{g})$. Since the usual action of $\Un(n+1)$ on $\mathbb{P}^n$ is Hamiltonian, so too is the action of $K$ on $\mathbb{P}(\mathfrak{g})$. Furthermore, one has the moment map $\Phi:\mathbb{P}(\mathfrak{g})\rightarrow\mathfrak{k}^*$ defined by $$\Phi([\xi])(X)=\frac{\text{Im}(\langle[X,\xi],\xi\rangle)}{\langle\xi,\xi\rangle},$$ where $X\in\mathfrak{g}\setminus\{0\}$ and $\eta\in\mathfrak{k}$ (see \cite{Moment} for a derivation of $\Phi$). Note that the K\"{a}hler structure on $\mathbb{P}(\mathfrak{g})$ restricts to a $K$-invariant K\"{a}hler structure on the smooth subvariety $\mathbb{P}(\Theta)$, and the action of $K$ on $\mathbb{P}(\Theta)$ is Hamiltonian.

It should be noted that $\mathbb{P}(\Theta)$ is generally not projective. However, $\mathbb{P}(\Theta_{\text{min}})$ is the $G$-orbit in $\mathbb{P}(\mathcal{N})$ of minimal dimension, meaning that it is a closed (hence projective) subvariety of $\mathbb{P}(\mathfrak{g})$. This will be crucial to our study of $\mathbb{P}(\Theta_{\text{min}})$, and subsequently to our description of $\Theta_{\text{min}}$ itself.

\subsection{Description of the $T$-Fixed Points}\label{Fixed}
Let us take a moment to examine the Hamiltonian action of $T$ on $\mathbb{P}(\Theta)$, where $\Theta\subseteq\mathfrak{g}$ is a non-zero nilpotent orbit. We have $$\mathfrak{g}=\mathfrak{h}\oplus\bigoplus_{\beta\in\Delta}\mathfrak{g}_{\beta},$$ the weight space decomposition of the representation $\Adj\vert_{T}$. Note that a point in $\mathbb{P}(\mathfrak{g})$ is fixed by $T$ if and only if it is a class of vectors in $\mathfrak{g}\setminus\{0\}$ with the property that $T$ acts by scaling each vector. In other words, $$\mathbb{P}(\mathfrak{g})^T=\mathbb{P}(\mathfrak{h})\cup\{\mathfrak{g}_{\beta}:\beta\in\Delta\}.$$ With this description, we may determine $\mathbb{P}(\Theta)^T$. Indeed, since $\mathfrak{h}$ consists of semisimple elements of $\mathfrak{g}$ while $\Theta$ consists of non-zero nilpotent elements, we find that $\mathfrak{h}\cap\Theta=\emptyset$. Hence, $$\mathbb{P}(\Theta)^T=\{\mathfrak{g}_{\beta}:\beta\in\Delta, \mathfrak{g}_{\beta}\cap\Theta\neq\emptyset\},$$ a finite set. In particular, $\mathbb{P}(\Theta)^T$ is non-empty if and only if $\Theta$ is the orbit of a root vector.

Let us take a moment to provide a more refined description of $\mathbb{P}(\Theta)^T$. To this end, we will require the below lemma.

\begin{lemma}\label{Weyl}
Let $\beta,\gamma\in\Delta$ be roots. The root spaces $\mathfrak{g}_{\beta}$ and $\mathfrak{g}_{\gamma}$ are $G$-conjugate if and only if $\beta$ and $\gamma$ are conjugate under $W$.
\end{lemma}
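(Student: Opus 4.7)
The proof splits into two implications, with essentially all of the work lying in the converse. For the forward direction, my plan is to observe that any lift $n\in N_G(H)$ of a Weyl element $w\in W$ sends $\mathfrak{g}_\beta$ to $\mathfrak{g}_{w\cdot\beta}$. Indeed, for $X\in\mathfrak{g}_\beta$ and $h\in H$, one computes $\Adj_h(\Adj_n(X))=\Adj_n(\Adj_{n^{-1}hn}(X))=\beta(n^{-1}hn)\Adj_n(X)=(w\cdot\beta)(h)\Adj_n(X)$, and dimension counting (each root space is one-dimensional) upgrades the inclusion $\Adj_n(\mathfrak{g}_\beta)\subseteq\mathfrak{g}_{w\cdot\beta}$ to equality. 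Hence if $\gamma=w\cdot\beta$, any such $n$ witnesses the $G$-conjugacy of the two root spaces.

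For the converse, suppose $g\in G$ satisfies $\Adj_g(\mathfrak{g}_\beta)=\mathfrak{g}_\gamma$. My strategy is to modify $g$ on one side by an element preserving $\mathfrak{g}_\beta$ so as to produce an element of $N_G(H)$. The key auxiliary object is the closed subgroup $P_\beta:=\{x\in G:\Adj_x(\mathfrak{g}_\beta)=\mathfrak{g}_\beta\}$, the stabilizer of the line $\mathfrak{g}_\beta$ in $\mathfrak{g}$. One has $H\subseteq P_\beta$ because $H$ acts on $\mathfrak{g}_\beta$ via the character $\beta$, and similarly $g^{-1}Hg\subseteq P_\beta$: every $h\in H$ preserves $\mathfrak{g}_\gamma=\Adj_g(\mathfrak{g}_\beta)$ (acting by the scalar $\gamma(h)$), so $\Adj_{g^{-1}hg}$ preserves $\mathfrak{g}_\beta$. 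Thus $H$ and $g^{-1}Hg$ are two maximal tori of the connected algebraic group $P_\beta^{\circ}$, and by the conjugacy theorem for maximal tori there exists $p\in P_\beta^{\circ}$ with $p(g^{-1}Hg)p^{-1}=H$.

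Setting $n:=pg^{-1}\in N_G(H)$ with Weyl class $w\in W$ completes the argument. On the one hand, applying the forward-direction computation to $n$ at the root $\gamma$ yields $\Adj_n(\mathfrak{g}_\gamma)=\mathfrak{g}_{w\cdot\gamma}$. On the other hand, $\Adj_n(\mathfrak{g}_\gamma)=\Adj_p(\Adj_{g^{-1}}(\mathfrak{g}_\gamma))=\Adj_p(\mathfrak{g}_\beta)=\mathfrak{g}_\beta$, because $p\in P_\beta$. Matching the two gives $w\cdot\gamma=\beta$, which is the desired Weyl-conjugacy. The main conceptual obstacle I anticipate is identifying the correct auxiliary subgroup: once one recognizes that the line-stabilizer $P_\beta$ harbors both $H$ and the conjugate torus $g^{-1}Hg$ as maximal tori, the standard conjugacy theorem for maximal tori reduces the lemma to a routine check. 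Should the appeal to conjugacy inside $P_\beta^{\circ}$ prove inconvenient, a backup plan is to use Jacobson-Morozov to extend $e_\beta$ and $e_\gamma$ to $\mathfrak{sl}_2$-triples with semisimple elements $\beta^{\vee},\gamma^{\vee}\in\mathfrak{h}$, use Kostant's uniqueness of such triples modulo the centralizer of $e_\gamma$ to arrange $\Adj_{g'}(\beta^{\vee})=\gamma^{\vee}$ for some modified $g'$, and then invoke the standard fact that two $G$-conjugate elements of $\mathfrak{h}$ are already $W$-conjugate.
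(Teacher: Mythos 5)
Your proposal is correct and follows essentially the same route as the paper: the forward direction is the same direct computation with a lift of $w$ in $N_G(H)$, and the converse uses the same key idea of viewing $H$ and a $g$-conjugate of $H$ as maximal tori of the line-stabilizer of a root space, then invoking conjugacy of maximal tori to replace $g$ by an element of $N_G(H)$. The only differences are cosmetic (you stabilize $\mathfrak{g}_{\beta}$ where the paper stabilizes $\mathfrak{g}_{\gamma}$, and you are slightly more careful in passing to the identity component before applying the conjugacy theorem).
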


\begin{proof}
Suppose that $w\in W$ and that $\beta=w\cdot\gamma$. Choosing a representative $g\in N_G(H)$ of $w$, this means precisely that $\beta=\gamma\circ\varphi_{g^{-1}}\vert_H$, where $\varphi_{g^{-1}}:G\rightarrow G$ is conjugation by $g^{-1}$. Given $h\in H$ and $\xi\in\mathfrak{g}_{\beta}$, note that $$\Adj_{h}(\Adj_g(\xi))=\Adj_g(\Adj_{g^{-1}hg}(\xi))$$ $$=\Adj_g(\beta(g^{-1}hg)\xi)$$ $$=\Adj_g(\gamma(h)\xi)$$ $$=\gamma(h)(\Adj_g(\xi)).$$ It follows that $\mathfrak{g}_{\gamma}=\Adj_g(\mathfrak{g}_{\beta})$.

Conversely, suppose that $g\in G$ and that $\mathfrak{g}_{\gamma}=\Adj_g(\mathfrak{g}_{\beta})$. Consider the Zariski-closed subgroup $$L:=\{x\in G:\Adj_x(\mathfrak{g}_{\gamma})=\mathfrak{g}_{\gamma}\},$$ noting that $H, gHg^{-1}\subseteq L$. Since $H$ and $gHg^{-1}$ are maximal tori of $L$, there exists $x\in L$ for which $xHx^{-1}=gHg^{-1}$. Hence, $x^{-1}g\in N_G(H)$ and $\Adj_{x^{-1}g}(\mathfrak{g}_{\beta})=\mathfrak{g}_{\gamma}$. We may therefore assume that $g\in N_G(H)$. Now, let $w\in W$ denote the class of $g$. Given $h\in H$ and $\xi\in\mathfrak{g}_{\beta}$, we find that $$(w\cdot\beta)(h)\xi=\beta(g^{-1}hg)\xi$$ $$=\Adj_{g^{-1}hg}(\xi)$$ $$=\Adj_{g^{-1}}(\gamma(h)\Adj_g(\xi))$$ $$=\gamma(h)\xi.$$ It follows that $\gamma=w\cdot\beta$.
\end{proof}

Since $\mathfrak{g}$ is a simple Lie algebra, the root system associated with the pair $(\mathfrak{g},\mathfrak{h})$ is irreducible. Hence, there are at most two distinct root lengths (namely, those of the long and short roots), and the roots of a given length constitute an orbit of $W$ in $\Delta$. By Lemma \ref{Weyl}, there are at most two nilpotent $G$-orbits $\Theta$ for which $\mathbb{P}(\Theta)^T$ is non-empty, the orbits of root vectors for the short and long roots. Furthermore, if $\Theta$ is the orbit of a root vector $e_{\beta}\in\mathfrak{g}_{\beta}\setminus\{0\}$, $\beta\in\Delta$, then $\mathbb{P}(\Theta)^T$ is the union of the points $\mathfrak{g}_{\gamma}$ for all $\gamma\in\Delta$ with length equal to that of $\beta$. Since $\Theta_{\text{min}}$ is the orbit of a long root vector, $\mathbb{P}(\Theta_{\text{min}})^T=\{\mathfrak{g}_{\gamma}:\gamma\in\Delta_{\text{long}}\}$, where $\Delta_{\text{long}}\subseteq\Delta$ is the set of long roots.

\subsection{The Moment Polytope of $\mathbb{P}(\Theta_{\text{min}})$}\label{Polytope}
Note that the moment map $\Phi:\mathbb{P}(\mathfrak{g})\rightarrow\mathfrak{k}^*$ considered in \ref{Action} can be modified to obtain a moment map for the Hamiltonian action of $T$ on $\mathbb{P}(\Theta_{\text{min}})$. Indeed, we denote by $\mu:\mathbb{P}(\Theta_{\text{min}})\rightarrow\mathfrak{t}^*$ the moment map given by the composition $$\mathbb{P}(\Theta_{\text{min}})\hookrightarrow\mathbb{P}(\mathfrak{g})\xrightarrow{\Phi}\mathfrak{k}^*\rightarrow\mathfrak{t}^*.$$

Recall that $$\mathbb{P}(\Theta_{\text{min}})^T=\{\mathfrak{g}_{\beta}:\beta\in\Delta_{\text{long}}\}.$$ Given $\beta\in\Delta_{\text{long}}$, choose a point $e_{\beta}\in\mathfrak{g}_{\beta}\setminus\{0\}$. Note that for $X\in\mathfrak{t}$, $$\mu(\mathfrak{g}_{\beta})(X)=\frac{\text{Im}(\langle[X,e_{\beta}],e_{\beta}\rangle)}{\langle e_{\beta},e_{\beta}\rangle}=\frac{\text{Im}(d_e\beta(X)\langle e_{\beta},e_{\beta}\rangle)}{\langle e_{\beta},e_{\beta}\rangle}=\text{Im}(d_e\beta(X)),$$ where $d_e\beta:\mathfrak{t}\rightarrow i\mathbb{R}$ is the morphism of real Lie algebras induced by $\beta:T\rightarrow\Un(1)$. If one regards the weight lattice $\Hom(T,\Un(1))$ as included into $\mathfrak{t}^*$ in the usual way, then our above calculation takes the form $$\mu(\mathfrak{g}_{\beta})=\beta.$$ The moment polytope $\mu(\mathbb{P}(\Theta_{\text{min}}))$ is then the convex hull of $\Delta_{\text{long}}$ in $\mathfrak{t}^*$.

\subsection{Partial Flag Varieties as GKM Manifolds}\label{Partial Flag}
Let us consider the matter of computing $H_{T}^*(\mathbb{P}(\Theta_{\text{min}}))$. To this end, choose a long root $\alpha\in\Delta_{\text{long}}$, so that $\mathfrak{g}_{\alpha}\in\mathbb{P}(\Theta_{\text{min}})^T$. Let $Q$ denote the $G$-stabilizer of $\mathfrak{g}_{\alpha}$. Since $G/Q\cong\mathbb{P}(\Theta_{\text{min}})$ is projective, $Q$ is a parabolic subgroup of $G$. Accordingly, we will address the more general issue of computing the $T$-equivariant cohomology of the partial flag variety $G/P$, where $P\subseteq G$ is a parabolic subgroup containing $T$. Indeed, we will establish that $G/P$ is a GKM (Goresky-Kottwitz-MacPherson) manifold, allowing us to subsequently deploy some well-known machinery to compute its $T$-equivariant cohomology (see \cite{ECB} and \cite{GKM}).

Let us recall the definition of a GKM manifold.
\begin{definition}\label{GKM}
A compact $T$-manifold $X$ is called a GKM manifold if
\begin{itemize}
\item[(i)] $X^T$ is finite, and
\item[(ii)] for every codimension-one subtorus $S\subseteq T$, $\dim(X^S)\leq 2$.
\end{itemize}
\end{definition}

Let us briefly address the significance of this notion in the context of computing $T$-equivariant cohomology. Suppose that $X$ is a GKM manifold as in Definition \ref{GKM}. If $S\subseteq T$ is a subtorus of codimension one and $Y$ is a connected component of $X^S$, then $Y\cap X^T\neq\emptyset$. In particular, $Y$ is $T$-invariant. Furthermore, $Y=\{*\}$ or $Y$ is isomorphic as a $T$-manifold to $S^2$ on which $T$ acts via some non-trivial character $\alpha_{Y}\in\Hom(T,\Un(1))$. In the latter case, $Y^T$ consists of two points, $x_Y^+$ and $x_Y^{-}$.

Let $\{Y_j\}_{j=1}^n$ be the collection of those two-spheres in $X$ arising as connected components of fixed point submanifolds of codimension-one subtori (henceforth called distinguished two-spheres). The inclusion $X^T\hookrightarrow X$ induces an injective graded algebra morphism $H_T^*(X)\hookrightarrow H_T^*(X^T)=\Map(X^T,H_T^*(\text{pt}))$ with image $$\{f\in\Map(X^T,H_T^*(\text{pt})):\forall j\in\{1,\ldots,n\}, \alpha_{Y_j}\vert(f(x_{Y_j}^+)-f(x_{Y_j}^{-}))\}\cong H_T^*(X).$$

Note that Definition \ref{GKM} is precisely the definition of GKM manifold given in \cite{GHZ}, where the authors exhibited certain homogeneous spaces of a compact connected simply-connected semisimple Lie group as GKM manifolds. Below is a statement of their result.

\begin{theorem}\label{GHZTheorem}
Let $M$ be a compact connected simply-connected semisimple Lie group. Let $R\subseteq M$ be a maximal torus, and let $U$ be a closed subgroup of $M$ containing $R$. Assume that $M/U$ is oriented. Then, the left-multiplicative action of $R$ renders $M/U$ a GKM manifold.
\end{theorem}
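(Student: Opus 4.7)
The plan is to verify the two conditions of Definition \ref{GKM} directly, leveraging the hypothesis $R\subseteq U$ to force a rank equality and then exploiting the root space decomposition of $\mathfrak{m}$ relative to $R$.

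For condition (i), a coset $mU$ is $R$-fixed if and only if $m^{-1}Rm\subseteq U$. Since $R$ has dimension equal to $\mathrm{rank}(M)$, it is in fact a maximal torus of $U$; likewise $m^{-1}Rm$ is a maximal torus of $U$. Conjugacy of maximal tori in $U$ furnishes some $u\in U$ with $m^{-1}Rm=uRu^{-1}$, so $um^{-1}\in N_M(R)$ and $m\in N_M(R)\cdot U$. The reverse inclusion being immediate, one obtains $(M/U)^R\cong N_M(R)/N_U(R)$, a finite set of cardinality $|W_M|/|W_U|$.

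For condition (ii), let $S\subseteq R$ be a codimension-one subtorus and $C\subseteq(M/U)^S$ a connected component containing $mU$. I would first reduce to a component through the identity coset by two translations. The map $L_{m^{-1}}:M/U\to M/U$ intertwines the $S$-action with the $m^{-1}Sm$-action and therefore carries $C$ diffeomorphically onto a component of $(M/U)^{m^{-1}Sm}$ through $eU$. Since $m^{-1}Sm\subseteq U$, it lies in some maximal torus of $U$; conjugacy in $U$ produces $u\in U$ such that $S'':=u^{-1}(m^{-1}Sm)u$ is a codimension-one subtorus of $R$, and $L_u$ (which fixes $eU$) maps the preceding component to the component $C''$ of $(M/U)^{S''}$ through $eU$. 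The dimension of $C''$ equals that of the $S''$-invariant subspace of $\mathfrak{m}/\mathfrak{u}$. Using the decompositions $\mathfrak{m}^{\mathbb{C}}=\mathfrak{r}^{\mathbb{C}}\oplus\bigoplus_{\beta\in\Delta}\mathfrak{g}_{\beta}$ and $\mathfrak{u}^{\mathbb{C}}=\mathfrak{r}^{\mathbb{C}}\oplus\bigoplus_{\beta\in\Delta_U}\mathfrak{g}_{\beta}$ for an appropriate subset $\Delta_U\subseteq\Delta$, this subspace has real dimension twice the number of positive roots $\beta\in\Delta^+\setminus\Delta_U^+$ satisfying $\beta|_{\mathfrak{s}''}=0$. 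Since such roots lie in the one-dimensional annihilator of $\mathfrak{s}''$ in $\mathfrak{r}^*$ and the root system of $M$ is reduced, the count is at most $1$, whence $\dim C=\dim C''\leq 2$.

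The main obstacle will be the bookkeeping in this two-step reduction: one must confirm that both translations respect the fixed-component structure and preserve dimension, and that the second conjugation can genuinely be carried out within $U$. The latter depends crucially on $R$ being a maximal torus of $U$, not merely a subtorus---a property secured for free by the rank comparison exploited in condition (i). Once the reduction is justified, the root-theoretic bound is essentially immediate.
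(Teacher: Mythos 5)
The paper does not prove this statement: Theorem \ref{GHZTheorem} is quoted from \cite{GHZ} and used as a black box, so there is no in-text argument to compare yours against. Your proof is correct, and it is essentially the Guillemin--Holm--Zara argument reconstructed from scratch. The two load-bearing points are exactly the ones you isolate: (a) because $R\subseteq U$ and $R$ is maximal in $M$, the rank of $U$ equals that of $M$, so $R$ is a maximal torus of $U$; this is what makes both the identification $(M/U)^R\cong N_M(R)/N_U(R)$ and the conjugation of $m^{-1}Sm$ back into $R$ happen \emph{inside} $U$, which is what the translation argument needs; and (b) the isotropy representation $\mathfrak{m}/\mathfrak{u}$ complexifies to $\bigoplus_{\beta\in\Delta\setminus\Delta_U}\mathfrak{g}_{\beta}$, and the roots annihilating a corank-one $\mathfrak{s}''\subseteq\mathfrak{r}$ lie on a single line of $\mathfrak{r}^*$, hence number at most two since the root system of a semisimple Lie algebra is reduced. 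Two details to make explicit in a write-up: conjugacy of maximal tori should be invoked in the identity component $U_0$ (the statement does not assume $U$ connected, though every torus in $U$ lies in $U_0$, so nothing breaks); and the step ``real dimension equals twice the number of positive roots'' silently uses $\Delta_U=-\Delta_U$, which holds because $\mathfrak{u}^{\mathbb{C}}$ is stable under the conjugation of $\mathfrak{m}^{\mathbb{C}}$ fixing $\mathfrak{m}$, and that conjugation interchanges $\mathfrak{g}_{\beta}$ and $\mathfrak{g}_{-\beta}$. Finally, your proof never uses orientability, semisimplicity, or simple connectivity of $M$; those hypotheses are carried along for the cohomological consequences of the GKM property rather than for the GKM property itself, so your argument is in fact marginally more general than the quoted statement.
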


For the duration of this section, let us fix a parabolic subgroup $P\subseteq G$ satisfying $B\subseteq P$. Note that $P$ is then the standard parabolic subgroup $P_{\Lambda}$ generated by $B$ and the root subgroups $\{U_{-\beta}:=\overline{\exp(\frak{g}_{-\beta})}:\beta\in\Lambda\}$ for some unique subset $\Lambda$ of $\Pi$.

\begin{corollary}
The partial flag variety $G/P$ is a GKM manifold for the left-multiplicative action of $T$.
\end{corollary}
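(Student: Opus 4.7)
The plan is to deduce this corollary directly from Theorem \ref{GHZTheorem} by transporting the question from $G/P$ over to a homogeneous space of the compact form $K$. Specifically, I would set $L := K\cap P$ and show that $L$ is a closed subgroup of $K$ containing $T$, with the $T$-equivariant identification $K/L \cong G/P$ furnished by the inclusion $K\hookrightarrow G$. Then applying Theorem \ref{GHZTheorem} with $M = K$, $R = T$, and $U = L$ would deliver the GKM property.

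First I would verify the structural hypotheses on $K$: since $G$ is a connected, simply-connected complex simple linear algebraic group, the maximal compact subgroup $K\subseteq G$ is compact, connected, simply-connected, and (simple, hence) semisimple, so it plays the role of $M$ in Theorem \ref{GHZTheorem}, with $T$ a maximal torus. Next I would verify the identification $K/L\cong G/P$: the Iwasawa decomposition (as invoked in the proof of Theorem \ref{diffeomorphism}) implies $G = KP$, and $L = K\cap P$ is closed in $K$, so the natural map $K/L\to G/P$ is a $K$-equivariant (hence $T$-equivariant) diffeomorphism. Since $T\subseteq B\subseteq P$ and $T\subseteq K$, we have $T\subseteq L$, as required.

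It then remains to check that $K/L$ is orientable. This is immediate: $G/P$ is a smooth complex projective variety, so it carries a canonical orientation coming from its complex structure, and the $T$-equivariant diffeomorphism $K/L\cong G/P$ transports this orientation to $K/L$. With all hypotheses of Theorem \ref{GHZTheorem} in place, the theorem asserts that $K/L$ is a GKM manifold for the left-multiplicative $T$-action, and this structure transfers along the diffeomorphism to exhibit $G/P$ as a GKM manifold.

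The only subtlety (and arguably the main obstacle, though it is a mild one) is confirming that the $T$-actions match up under the identification $K/L\cong G/P$; this is straightforward because the inclusion $K\hookrightarrow G$ intertwines left-multiplication by $T$ on both sides, and the induced map on quotients is therefore $T$-equivariant. No computation of weights or distinguished two-spheres is required at this stage, since the GKM conditions of Definition \ref{GKM} are exactly what Theorem \ref{GHZTheorem} certifies.
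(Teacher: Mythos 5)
Your proposal is correct and takes essentially the same route as the paper: use the Iwasawa decomposition to identify $G/P$ with $K/(K\cap P)$ as a $T$-manifold, check that $K$ is compact, connected, simply-connected, and semisimple, that $T\subseteq K\cap P$, and that the quotient is oriented (via the complex structure on $G/P$), and then invoke Theorem \ref{GHZTheorem}. No substantive differences from the paper's argument.
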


\begin{proof}
The Iwasawa decomposition of $G$ tells us that $G=KB$. In particular, $K$ acts transitively on $G/P$. Since the $K$-stabilizer of the identity coset $[e]\in G/P$ is $K\cap P$, we have a $K$-manifold isomorphism $K/(K\cap P)\cong G/P$. It will therefore suffice to establish that $K/(K\cap P)$ is a GKM manifold for the left-multiplicative action of $T$. For this, we will invoke Theorem \ref{GHZTheorem}. We need only note that $K$ is connected, simply-connected, and semisimple (since $G$ is), that $T\subseteq K\cap P$, and that $K/(K\cap P)$ is oriented (as $G/P$ is).
\end{proof}

It thus remains to determine the fixed points $(G/P)^T$ and the distinguished two-spheres. Accordingly, we will require the below analogue of Theorem 2.2 of \cite{GHZ}.

\begin{lemma}\label{cover}
Let $S\subseteq T$ be a subtorus. The image of $(G/B)^S$ under the fibration $G/B\xrightarrow{\varphi}G/P$ is $(G/P)^S$.
\end{lemma}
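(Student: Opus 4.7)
The statement is the set-theoretic equality $\varphi((G/B)^S) = (G/P)^S$, and I would prove it by establishing both containments. The inclusion $\varphi((G/B)^S) \subseteq (G/P)^S$ is immediate from the $T$-equivariance of $\varphi$: the projection intertwines the left $T$-actions, so it carries $S$-fixed cosets to $S$-fixed cosets. The content of the lemma lies in the reverse inclusion.

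For that, I would fix a coset $xP \in (G/P)^S$ and produce an $S$-fixed point of $G/B$ lying above it. The condition $sxP = xP$ for all $s \in S$ rephrases as $x^{-1}Sx \subseteq P$, so $S' := x^{-1}Sx$ is a subtorus of $P$. The fiber $\varphi^{-1}(xP) = xP/B$ is $S$-stable, and the bijection $P/B \to xP/B$ sending $pB$ to $xpB$ transports the $S$-action on the fiber to the left-multiplication action of $S'$ on $P/B$. Thus the problem reduces to producing an $S'$-fixed point in $P/B$.

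To do so, I would observe that $P/B$ is projective (either as a closed subvariety of $G/B$, or as the full flag variety of the Levi factor of $P$) and that $S'$ is a connected solvable algebraic group. Borel's fixed-point theorem then supplies some $pB \in (P/B)^{S'}$, whence $xpB \in (G/B)^S$ and $\varphi(xpB) = xP$. Equivalently and more concretely, one may invoke conjugacy of maximal tori inside $P$: since $T$ is a maximal torus of $P$ (any torus of $P$ is also a torus of $G$) and $S'$ sits inside some maximal torus of $P$, there exists $p \in P$ with $pS'p^{-1} \subseteq T \subseteq B$, making $p^{-1}B$ an $S'$-fixed coset.

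I do not anticipate a substantive obstacle. The argument is essentially a two-step reduction, and the only non-formal ingredient is the classical statement that a torus acting on a complete variety has a fixed point. The sole conceptual move is recognizing that the fiber of $\varphi$ over $xP$, after left-translating by $x^{-1}$, is precisely a smaller flag variety equipped with an action of the subtorus $S'$, to which that classical statement applies.
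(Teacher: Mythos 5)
Your argument is correct, but it is a genuinely different route from the paper's. The paper does not argue from scratch: it passes to the compact picture via the $T$-equivariant isomorphisms $K/T\cong G/B$ and $K/(K\cap P)\cong G/P$ and then simply cites Theorem 2.2 of Guillemin--Holm--Zara, which asserts exactly this surjectivity statement for the fibration $K/T\rightarrow K/(K\cap P)$. Your proof is self-contained on the algebraic side: the reduction of the fiber $\varphi^{-1}(xP)$ to the complete variety $P/B$ with the translated torus $S'=x^{-1}Sx$ acting, followed by Borel's fixed-point theorem (or conjugacy of maximal tori in $P$), is exactly the standard proof of such statements and buys independence from the GHZ reference. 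One point deserves a sentence of care: in this paper $S$ is a subtorus of the \emph{compact} torus $T$, so $S'$ is a compact torus rather than a connected solvable linear algebraic group over $\mathbb{C}$, and Borel's theorem (or the conjugacy of maximal tori) does not apply to it verbatim. The fix is routine --- replace $S'$ by its Zariski closure in $P$, which is a complex algebraic torus with the same fixed-point set since point stabilizers are Zariski closed --- but it should be said explicitly; likewise your final containment should read $pS'p^{-1}\subseteq H\subseteq B$ with $H$ the complex maximal torus, since conjugation inside $P$ only lands you in a maximal torus of the complex group.
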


\begin{proof}
Consider the fibration $\psi:K/T\rightarrow K/(K\cap P)$. By Theorem 2.2 of \cite{GHZ}, $\psi((K/T)^S)=(K/(K\cap P))^S$. Since each of the maps in the commutative diagram
$$\begin{tikzpicture}
\tikzset{node distance=2.5cm,auto}
  \node (A) {$K/T$};
  \node (B) [below of=A]{$G/B$};
  \node (C) [right of=A] {$K/(K\cap P)$};
  \node (D) [right of=B] {$G/P$};
  \draw[->] (A) to node {$\cong$} (B);
  \draw[->] (A) to node {$\psi$} (C);
  \draw[->] (B) to node {$\varphi$} (D);
  \draw[->] (C) to node {$\cong$} (D);
\end{tikzpicture}$$
is $T$-equivariant, the desired result follows.
\end{proof}

We immediately obtain a description of $(G/P)^T$. Indeed, note that $(G/B)^T=\{[k]:k\in N_K(T)\}$. Hence, $(G/P)^T$ is identified with $N_K(T)/(N_K(T)\cap P)\cong W/W_P$, where $W_P$ is the subgroup of $W$ generated by the simple reflections $\{s_{\beta}:\beta\in\Lambda\}$ (see \cite{BGG}). Let us now determine the distinguished two-spheres in $G/P$.

\begin{lemma}
A submanifold $X\subseteq G/P$ is a distinguished two-sphere if and only if it is related by the action of $N_K(T)$ to a distinguished two-sphere containing the identity coset $[e]$.
\end{lemma}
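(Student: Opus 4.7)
The plan is to exploit the natural left-multiplicative action of $N_K(T)$ on $G/P$, which permutes the set of distinguished two-spheres. The key observation I would record first is that, because $N_K(T)$ normalizes $T$, for every $n\in N_K(T)$ and every subtorus $S\subseteq T$, left-multiplication by $n$ carries $(G/P)^S$ bijectively onto $(G/P)^{nSn^{-1}}$. The subtorus $nSn^{-1}$ has the same dimension as $S$, and if $Y$ is a component of $(G/P)^S$ carrying a non-trivial $T$-action, then $nY$ is a component of $(G/P)^{nSn^{-1}}$ carrying the twisted but still non-trivial $T$-action. Consequently, left-multiplication by any element of $N_K(T)$ sends distinguished two-spheres to distinguished two-spheres, and the ``if'' direction of the lemma follows immediately: if $X_0$ is a distinguished two-sphere with $[e]\in X_0$, then $n\cdot X_0$ is distinguished for every $n\in N_K(T)$.

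For the converse direction, I would begin with a distinguished two-sphere $X\subseteq G/P$. By the general GKM setup recalled in Section \ref{Partial Flag}, $X^T$ consists of exactly two points, each lying in $(G/P)^T$. By the discussion following Lemma \ref{cover}, $(G/P)^T = N_K(T)/(N_K(T)\cap P)$, so each such fixed point admits a representative $n\in N_K(T)$. Choosing any such $n$, the preceding observation guarantees that $n^{-1}\cdot X$ is a distinguished two-sphere, and by construction it contains $n^{-1}\cdot[n]=[e]$. Writing $X = n\cdot(n^{-1}\cdot X)$ then exhibits $X$ as an $N_K(T)$-translate of a distinguished two-sphere through $[e]$, as required.

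The only subtlety, and the main (mild) obstacle, is the bookkeeping around the fact that left-multiplication by $n\in N_K(T)$ is not literally $T$-equivariant but only equivariant up to conjugation: one must invoke the identity $t\cdot(n\cdot gP) = n\cdot((n^{-1}tn)\cdot gP)$ to conclude that fixed-point sets and their stabilizer subtori get conjugated compatibly. Once this identity is written down, both directions of the lemma reduce to the formal argument above.
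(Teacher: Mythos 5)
Your proposal is correct and follows essentially the same route as the paper: both arguments translate a distinguished two-sphere by a representative $k\in N_K(T)$ of one of its two $T$-fixed points, using that conjugation by $N_K(T)$ sends the codimension-one subtorus $S$ to another codimension-one subtorus and hence carries components of $(G/P)^S$ to components of $(G/P)^{kSk^{-1}}$. Your explicit bookkeeping of the twisted equivariance identity $t\cdot(n\cdot gP)=n\cdot((n^{-1}tn)\cdot gP)$ just makes precise what the paper leaves implicit.
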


\begin{proof}
Suppose that $X$ is a two-sphere arising as a component of $(G/P)^S$ for some codimension-one subtorus $S\subseteq T$. Note that $X^T=\{[k_1],[k_2]\}$ for some $k_1,k_2\in N_K(T)$. Furthermore, $R:=k_1^{-1}Sk_1$ is a codimension-one subtorus of $T$ and $k_1^{-1}X\cong S^2$ is a component of $(G/P)^R$ containing $[e]$. The proof of the converse is then a simple reversal of this argument.
\end{proof}

Accordingly, we will temporarily restrict our attention to the distinguished two-spheres in $G/P$ containing $[e]$. Let $X\subseteq G/P$ denote one such two-sphere. Note that $$T_{[e]}(G/P)\cong\mathfrak{g}/\mathfrak{p}\cong\bigoplus_{\beta\in\Delta\setminus\Delta_P}\mathfrak{g}_{\beta}$$ as complex $T$-modules, where $\Delta_P$ is the set of roots whose root spaces belong to $\mathfrak{p}$. Since $T_{[e]}X$ is a complex one-dimensional $T$-invariant subspace of $T_{[e]}(G/P)$, $T_{[e]}X\cong\mathfrak{g}_{\beta}$ for some $\beta\in\Delta\setminus\Delta_P$. In \cite{GHZ}, it is then concluded that $X^T=\{[e],[s_{\beta}]\}\subseteq W/W_P$. Furthermore, by associating to $X$ the weight of $T_{[e]}X$, we obtain a bijection between $\Delta\setminus\Delta_P$ and the distinguished two-spheres containing $[e]$.

Given $[w]\in W/W_P$, choose a representative $k\in N_K(T)$ of $w$. Note that the distinguished two-spheres containing $[w]$ are then the left-translates by $k$ of the distinguished two-spheres containing $[e]$.

\begin{lemma}
Let $X\subseteq G/P$ be a distinguished two-sphere containing $[e]$, so that $Y:=kX$ is a distinguished two-sphere containing $[w]$. If $\beta\in\Delta$ is the weight with which $T$ acts on $T_{[e]}X$, then $w\cdot\beta$ is the weight with which $T$ acts on $T_{[w]}Y$.
\end{lemma}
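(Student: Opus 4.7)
The plan is to exploit the fact that left-multiplication by $k$, while not itself $T$-equivariant, intertwines the $T$-action on $X$ with a twisted $T$-action on $Y$ via the Weyl group element $w$. The key observation is that $k\in N_K(T)$, so for every $t\in T$ the element $k^{-1}tk$ again lies in $T$; in fact $k^{-1}tk$ is the image of $t$ under the action of $w^{-1}$ on $T$.

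Concretely, I would define $\ell_k:G/P\rightarrow G/P$ by $\ell_k([g])=[kg]$, a $T$-manifold diffeomorphism of $G/P$ which restricts to a diffeomorphism $\ell_k:X\rightarrow Y$ sending the $T$-fixed point $[e]$ to the $T$-fixed point $[k]=[w]$. For any $t\in T$ and $x\in X$ the identity $tk=k(k^{-1}tk)$ gives
$$t\cdot\ell_k(x)=\ell_k\bigl((k^{-1}tk)\cdot x\bigr).$$
Differentiating at $[e]$ yields a linear isomorphism $d\ell_k:T_{[e]}X\rightarrow T_{[w]}Y$ satisfying, for all $t\in T$ and $v\in T_{[e]}X$,
$$t\cdot d\ell_k(v)=d\ell_k\bigl((k^{-1}tk)\cdot v\bigr).$$
If $v$ is a weight vector of weight $\beta$, then $(k^{-1}tk)\cdot v=\beta(k^{-1}tk)v$, so
$$t\cdot d\ell_k(v)=\beta(k^{-1}tk)\,d\ell_k(v).$$
The proof is then completed by invoking the defining identity $(w\cdot\beta)(t)=\beta(k^{-1}tk)$ for the $W$-action on $\Hom(T,\Un(1))$, which shows that $d\ell_k(v)$ has $T$-weight $w\cdot\beta$.

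Since $T_{[e]}X$ is one-dimensional and $T$-invariant of weight $\beta$, and $d\ell_k$ carries it isomorphically onto the one-dimensional $T$-invariant subspace $T_{[w]}Y$, the claim follows. The only point requiring any care is the bookkeeping of conventions: $W$ acts on $T$ by conjugation, so the induced action on the character lattice is via precomposition with $\phi_{k^{-1}}$, giving $(w\cdot\beta)(t)=\beta(k^{-1}tk)$. This is entirely routine, so I do not anticipate a genuine obstacle; the argument is essentially the observation that a coset representative of $w$ conjugates the $T$-action on $X$ into the $T$-action on $Y$ after twisting characters by $w$.
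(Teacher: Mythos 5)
Your proposal is correct and follows essentially the same route as the paper: both define left-translation by $k$ on $G/P$, differentiate the intertwining identity $t\cdot[kg]=[k(k^{-1}tk)g]$ at $[e]$, and conclude via the convention $(w\cdot\beta)(t)=\beta(k^{-1}tk)$ that the weight of $T_{[w]}Y$ is $w\cdot\beta$. No substantive differences.
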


\begin{proof}
Consider the automorphism $\phi:G/P\rightarrow G/P$, $[g]\mapsto[kg]$, noting that $(d_{[e]}\phi)\vert_{T_{[e]}X}:T_{[e]}X\rightarrow T_{[w]}Y$ is a complex vector space isomorphism. Furthermore, $\phi(t[g])=(ktk^{-1})\phi([g])$ for all $t\in T$ and $g\in G$, so that $d_{[e]}\phi((k^{-1}tk)v)=td_{[e]}\phi(v)$ for all $v\in T_{[e]}(G/P)$. Hence, if $u\in T_{[w]}Y$, then $u=d_{[e]}\phi(v)$ for some $v\in T_{[e]}X$ and $$tu=td_{[e]}\phi(v)$$ $$=d_{[e]}\phi((k^{-1}tk)v)$$ $$=d_{[e]}\phi(\beta(k^{-1}tk)v)$$ $$=\beta(k^{-1}tk)d_{[e]}\phi(v)$$ $$=(w\cdot\beta)(t)u$$ for all $t\in T$.
\end{proof}

Let us summarize our findings.

\begin{theorem}\label{Equivariant Cohomology}
\begin{itemize}
\item[(i)] There is a natural bijection $W/W_P\cong (G/P)^T$.
\item[(ii)] Fix $[w]\in W/W_P\cong (G/P)^T$. Given $\beta\in\Delta\setminus\Delta_P$, there exists a unique distinguished two-sphere $X\subseteq G/P$ with $X^T=\{[w],[ws_{\beta}]\}$, and with the property that $w\cdot\beta$ is the weight of $T_{[w]}X$. Every distinguished two-sphere containing $[w]$ arises in this way.
\item[(iii)] We have a graded algebra isomorphism \begin{align}
H_T^*(G/P)\cong\{& f\in\Map(W/W_P\rightarrow H_{T}^*(\text{pt})):(w\cdot\beta)\vert(f([w])-f([ws_{\beta}]))\nonumber \\ & \forall{w\in W},\beta\in\Delta\setminus\Delta_P\}\nonumber \end{align}.
\end{itemize}
\end{theorem}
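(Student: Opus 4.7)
The plan is to assemble the three parts by combining the lemmas already established in the section with the general GKM recipe recalled after Definition \ref{GKM}. Since $G/P$ has already been shown to be a GKM manifold, once the $T$-fixed locus and the distinguished two-spheres are pinned down, part (iii) is immediate. So the real work is parts (i) and (ii).

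For part (i), I would apply Lemma \ref{cover} with $S=T$ to the fibration $\varphi:G/B\rightarrow G/P$, obtaining $(G/P)^T=\varphi((G/B)^T)$. Since $(G/B)^T=\{[k]:k\in N_K(T)\}$ by the standard Bruhat-type argument recalled just before the theorem, every $T$-fixed point of $G/P$ is represented by some $k\in N_K(T)$. Two representatives $k_1,k_2$ determine the same coset in $G/P$ iff $k_1^{-1}k_2\in N_K(T)\cap P$, and Bernstein-Gelfand-Gelfand (cited via \cite{BGG}) identifies $N_K(T)\cap P$ with a set projecting to $W_P$ in $W$. This gives the natural bijection $W/W_P\cong (G/P)^T$.

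For part (ii), I would combine the two preceding lemmas. Fix $[w]\in W/W_P$ and a lift $k\in N_K(T)$. By the first of these lemmas, every distinguished two-sphere $Y$ through $[w]$ is of the form $Y=kX$ for a unique distinguished two-sphere $X$ through $[e]$. The discussion preceding the theorem already sets up a bijection between $\Delta\setminus\Delta_P$ and the distinguished two-spheres through $[e]$, sending $\beta$ to the unique such $X$ with $T_{[e]}X\cong\mathfrak{g}_\beta$ and $X^T=\{[e],[s_\beta]\}$. Applying the automorphism $\phi:[g]\mapsto[kg]$, the fixed-point set $Y^T$ becomes $\{[w],[ws_\beta]\}$, and the last lemma of the subsection identifies the weight of $T_{[w]}Y$ as $w\cdot\beta$. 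Independence of the chosen lift $k$ needs a brief check: replacing $k$ by $kp$ with $p\in N_K(T)\cap P$ replaces $\beta$ by a $W_P$-translate, but the resulting data $([w],[ws_\beta])$ together with the weight $w\cdot\beta$ are unaffected up to the allowed reparametrization, so the map from $\Delta\setminus\Delta_P$ to distinguished two-spheres through $[w]$ is well defined and bijective.

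Part (iii) is then a direct application of the general GKM recipe recalled after Definition \ref{GKM}: the restriction $H_T^*(G/P)\hookrightarrow\Map((G/P)^T,H_T^*(\mathrm{pt}))$ is injective with image carved out by one divisibility condition per distinguished two-sphere. Substituting the descriptions from (i) and (ii) yields exactly the stated formula, where the congruence $(w\cdot\beta)\mid(f([w])-f([ws_\beta]))$ for all $w\in W$ and $\beta\in\Delta\setminus\Delta_P$ enumerates all distinguished two-spheres (each such sphere appears twice, once from each endpoint, but both occurrences impose the same condition since $(ws_\beta)\cdot\beta=-w\cdot\beta$ and divisibility is insensitive to sign). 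The main obstacle I anticipate is verifying the well-definedness in (ii) under the $W_P$-ambiguity, and matching the two enumerations of distinguished two-spheres in (iii), both of which are careful bookkeeping rather than substantive difficulty.
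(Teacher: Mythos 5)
Your proposal is correct and takes essentially the same route as the paper, which presents Theorem \ref{Equivariant Cohomology} purely as a summary of the preceding results of Section \ref{Partial Flag}: Lemma \ref{cover} with $S=T$ together with the identification $(G/B)^T=\{[k]:k\in N_K(T)\}$ for part (i), the two lemmas on translating distinguished two-spheres through $[e]$ and their weights for part (ii), and the GKM recipe recalled after Definition \ref{GKM} for part (iii). The only material you add is the explicit well-definedness check under the $W_P$-ambiguity of the lift $k$ and the matching of the two enumerations in (iii), both of which the paper leaves implicit and which you resolve correctly.
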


\subsection{A Description of $\Theta_{\text{min}}$ and $\mathbb{P}(\Theta_{\text{min}})$}\label{Description}
We devote this section to explicit descriptions of $\Theta_{\text{min}}$ and $\mathbb{P}(\Theta_{\text{min}})$ as homogeneous $G$-varieties. As noted earlier, the latter space is $G$-equivariantly isomorphic to a partial flag variety $G/P$. Accordingly, we shall begin by finding a parabolic subgroup $P\subseteq G$ with this property. In order to proceed, however, we will require the below result.

\begin{theorem}
Let $\Phi$ be an irreducible root system with collection of simple roots $\Sigma\subseteq\Phi$.
\begin{itemize}
\item[(i)] There exists a unique maximal root $\beta\in\Phi$ (called the highest root).
\item[(ii)] This root is long.
\item[(iii)] We have $\langle\beta,\gamma\rangle\geq0$ for all $\gamma\in\Sigma$.
\end{itemize}
\end{theorem}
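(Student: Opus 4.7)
The plan is to prove the three claims essentially in order, with a preliminary structural observation that any maximal root in the partial order has strictly positive coefficients on every simple root. Throughout I work with the partial order on $\Phi$ in which $\mu\leq\nu$ iff $\nu-\mu$ is a nonnegative integer combination of elements of $\Sigma$. Since $\Phi$ is finite, maximal elements exist.

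For the preliminary observation, I would fix a maximal root $\beta=\sum_{\alpha\in\Sigma}c_\alpha\alpha$, set $\Sigma_1:=\{\alpha\in\Sigma:c_\alpha>0\}$ and $\Sigma_2:=\Sigma\setminus\Sigma_1$, and aim to show $\Sigma_2=\emptyset$. Maximality forces $\langle\beta,\alpha\rangle\geq 0$ for every $\alpha\in\Sigma$: indeed, if $\langle\beta,\alpha\rangle<0$, then the $\alpha$-string through $\beta$ would contain $\beta+\alpha$, contradicting maximality. This already gives (iii). If $\Sigma_2$ were nonempty, then for any $\alpha'\in\Sigma_2$ we would have $\langle\beta,\alpha'\rangle=\sum_{\alpha\in\Sigma_1}c_\alpha\langle\alpha,\alpha'\rangle\geq 0$, while $\langle\alpha,\alpha'\rangle\leq 0$ for distinct simple roots, forcing $\langle\alpha,\alpha'\rangle=0$ for every $\alpha\in\Sigma_1$ and $\alpha'\in\Sigma_2$. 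Then $\Sigma$ would split as the orthogonal disjoint union $\Sigma_1\sqcup\Sigma_2$, contradicting irreducibility of $\Phi$. This establishes (i) up to uniqueness and yields (iii) as a byproduct.

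For uniqueness in (i), I would let $\beta$ and $\beta'$ be maximal roots, both dominant with all simple-root coefficients strictly positive by the previous step. Because $\beta'\neq 0$ there is some simple $\alpha$ with $\langle\beta',\alpha\rangle>0$; expanding $\beta=\sum_\alpha c_\alpha\alpha$ with $c_\alpha>0$ then yields $\langle\beta,\beta'\rangle>0$. The standard root-system lemma (if two roots have strictly positive inner product and are not negatives of each other, their difference is a root) applied to the positive roots $\beta,\beta'$ gives $\beta-\beta'\in\Phi\cup\{0\}$. Unless $\beta=\beta'$, one of $\pm(\beta-\beta')$ is a positive root, making one of $\beta,\beta'$ strictly larger than the other and contradicting maximality.

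For (ii), I would show $\|\beta\|\geq\|\mu\|$ for every $\mu\in\Phi$. Replacing $\mu$ by the unique dominant element of its $W$-orbit does not change its length, so I may assume $\mu$ is dominant. By (i), $\beta-\mu=\sum_\alpha c_\alpha\alpha$ with $c_\alpha\geq 0$, and then
\[\|\beta\|^2-\|\mu\|^2=\langle\beta-\mu,\beta+\mu\rangle=\sum_{\alpha\in\Sigma}c_\alpha\bigl(\langle\alpha,\beta\rangle+\langle\alpha,\mu\rangle\bigr)\geq 0,\]
since both $\beta$ and $\mu$ are dominant. The main obstacle in this plan is the uniqueness argument: one has to correctly package the root-system lemma together with the strict positivity $\langle\beta,\beta'\rangle>0$, and it is there that the preliminary result about all simple-root coefficients being strictly positive becomes indispensable. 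Once that is in place, (ii) and (iii) follow quickly from the dominance and positivity already established.
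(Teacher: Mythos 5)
Your proof is correct, but there is nothing in the paper to compare it against: the paper does not prove this theorem at all, deferring instead to Propositions 19 and 23 of Sternberg's lecture notes. What you have written is essentially the standard textbook argument (cf.\ Humphreys, Lemma 10.4.A): the dichotomy $\Sigma=\Sigma_1\sqcup\Sigma_2$ against irreducibility to get strict positivity of all coefficients, the lemma that $\langle\beta,\beta'\rangle>0$ forces $\beta-\beta'\in\Phi\cup\{0\}$ to get uniqueness, and the telescoping $\|\beta\|^2-\|\mu\|^2=\langle\beta-\mu,\beta+\mu\rangle\geq 0$ against a dominant representative to get that $\beta$ is long. Two small points you leave implicit and should state: first, a maximal root is automatically positive (if $\beta$ were negative then $-\beta-\beta=-2\beta$ is a nonnegative combination of simple roots, so $\beta<-\beta$); this is what licenses both the root-string step (you need $\beta\neq-\alpha$ for $\langle\beta,\alpha\rangle<0\Rightarrow\beta+\alpha\in\Phi$) and your tacit assumption that $\Sigma_2$ consists of the simple roots with coefficient \emph{zero} rather than possibly negative. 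Second, in part (ii) the claim that $\beta-\mu$ has nonnegative coefficients uses that the unique maximal element of a finite poset is in fact the maximum (every $\mu$ lies below some maximal element, which must be $\beta$); this is a one-line remark but it is doing real work. With those two sentences added, the argument is complete and self-contained, which is arguably an improvement over the paper's bare citation.
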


For a proof, the reader might refer to Propositions 19 and 23 in \cite{Sternberg}.

Denote by $\alpha\in\Delta_{+}$ the highest root, and choose a root vector $e_{\alpha}\in\mathfrak{g}_{\alpha}\setminus\{0\}$. Note that $[e_{\alpha}]=\mathfrak{g}_{\alpha}\in\mathbb{P}(\Theta_{\text{min}})^T$. Let $C_{\mathfrak{g}}(e_{\alpha})$ denote the centralizer of $e_{\alpha}$ with respect to the adjoint representation of $\mathfrak{g}$.

\begin{lemma}
$C_{\mathfrak{g}}(e_{\alpha})$ is a $\mathfrak{t}$-submodule of $\mathfrak{g}$.
\end{lemma}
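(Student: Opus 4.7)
The plan is to verify directly that $[\mathfrak{t}, C_{\mathfrak{g}}(e_{\alpha})] \subseteq C_{\mathfrak{g}}(e_{\alpha})$, which is precisely the condition for $C_{\mathfrak{g}}(e_{\alpha})$ to be a $\mathfrak{t}$-submodule of $\mathfrak{g}$. The essential input is that $e_{\alpha}$ is a weight vector for the adjoint action of $\mathfrak{t}$: since $e_\alpha \in \mathfrak{g}_\alpha$ and $\mathfrak{t} \subseteq \mathfrak{h}$, we have $[H, e_\alpha] = (d_e\alpha)(H)\, e_\alpha$ for every $H \in \mathfrak{t}$, where $d_e\alpha:\mathfrak{t}\rightarrow i\mathbb{R}$ is the differential of $\alpha:T\rightarrow\Un(1)$.

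Now fix $H \in \mathfrak{t}$ and $X \in C_{\mathfrak{g}}(e_{\alpha})$; I want to show $[H,X] \in C_{\mathfrak{g}}(e_{\alpha})$. By the Jacobi identity,
\[
[[H,X], e_{\alpha}] = [H, [X, e_{\alpha}]] - [X, [H, e_{\alpha}]].
\]
The first term vanishes because $[X, e_{\alpha}] = 0$ by hypothesis. For the second term, the weight-vector relation above gives $[H, e_{\alpha}] = (d_e\alpha)(H)\, e_{\alpha}$, so
\[
[X, [H, e_{\alpha}]] = (d_e\alpha)(H)\, [X, e_{\alpha}] = 0.
\]
Therefore $[[H,X], e_{\alpha}] = 0$, i.e.\ $[H,X]\in C_{\mathfrak{g}}(e_{\alpha})$, as required.

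There is essentially no obstacle here: the statement is a one-line consequence of Jacobi combined with the fact that $e_\alpha$ is a $\mathfrak{t}$-weight vector. The only minor point worth noting in the write-up is that $C_{\mathfrak{g}}(e_{\alpha})$ is already a complex subspace of $\mathfrak{g}$ (being the kernel of the $\mathbb{C}$-linear map $\adj_{e_\alpha}$), so closure under bracketing with $\mathfrak{t}$ automatically upgrades to closure under bracketing with $\mathfrak{h} = \mathfrak{t}_{\mathbb{C}}$; this observation will be useful later when one wants to decompose $C_{\mathfrak{g}}(e_{\alpha})$ into root spaces.
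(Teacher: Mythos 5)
Your argument is correct and is essentially identical to the paper's proof: both apply the Jacobi identity to $[[H,X],e_{\alpha}]$ and use that $e_{\alpha}$ is a $\mathfrak{t}$-weight vector so both resulting terms reduce to multiples of $[X,e_{\alpha}]=0$. The closing remark about extending to $\mathfrak{h}=\mathfrak{t}_{\mathbb{C}}$ is a sensible addition that the paper uses implicitly in the subsequent root-space decomposition.
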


\begin{proof}
This is a straightforward application of the Jacobi identity. Indeed, suppose that $X\in\mathfrak{t}$ and $Y\in C_{\mathfrak{g}}(e_{\alpha})$. Note that $$[[X,Y],e_{\alpha}]=[X,[Y,e_{\alpha}]]-[Y,[X,e_{\alpha}]]$$ $$=-d_e\alpha(X)[Y,e_{\alpha}]=0.$$
\end{proof}

In other words, $C_{\mathfrak{g}}(e_{\alpha})$ is a sum of $\mathfrak{t}$-submodules of the $\mathfrak{t}$-weight spaces occurring in the adjoint representation of $\mathfrak{t}$ on $\mathfrak{g}$. The summand coming from the trivial weight space $\mathfrak{h}$ is just $\aker(d_e\alpha)$, where we regard $d_e\alpha$ as belonging to $\mathfrak{h}^*$ instead of $\mathfrak{t}^*$. Furthermore, if $\beta\in\Delta$, then $\mathfrak{g}_{\beta}\subseteq C_{\mathfrak{g}}(e_{\alpha})$ if and only if $[\mathfrak{g}_{\alpha},\mathfrak{g}_{\beta}]=\{0\}$. Hence, we have established that $$C_{\mathfrak{g}}(e_{\alpha})=\aker(d_e\alpha)\oplus\bigoplus_{\{\beta\in\Delta:[\frak{g}_{\alpha},\frak{g}_{\beta}]=\{0\}\}}\mathfrak{g}_{\beta}.$$

Now, let $C_G(e_{\alpha})$ and $Q:=C_G([e_{\alpha}])$ be the $G$-stabilizers of $e_{\alpha}\in\Theta_{\text{min}}$ and $[e_{\alpha}]\in\mathbb{P}(\Theta_{\text{min}})$, respectively. The inclusion $C_G(e_{\alpha})\subseteq Q$ yields an inclusion of Lie algebras $C_{\mathfrak{g}}(e_{\alpha})\subseteq\mathfrak{q}:=\Lie(Q)$. Since $\dim_{\mathbb{C}}\mathfrak{q}=\dim_{\mathbb{C}}C_{\mathfrak{g}}(e_{\alpha})+1$ (a consequence of comparing the dimensions of $\Theta_{\text{min}}$ and $\mathbb{P}(\Theta_{\text{min}}))$, and since $\mathfrak{h}\subseteq\mathfrak{q}$ (as $H$ stabilizes $[e_{\alpha}]$), we must have $$\mathfrak{q}=\mathfrak{h}\oplus\bigoplus_{\{\beta\in\Delta:[\frak{g}_{\alpha},\frak{g}_{\beta}]=\{0\}\}}\mathfrak{g}_{\beta}.$$

In light of our having chosen $\alpha$ to be the highest root, $[\frak{g}_{\alpha},\frak{g}_{\beta}]=\{0\}$ for all $\beta\in\Delta_{+}$. It thus remains to determine those negative roots whose root spaces appear as summands of $\mathfrak{q}$.

\begin{lemma}
If $\beta\in\Delta_{-}$, then $[\frak{g}_{\alpha},\frak{g}_{\beta}]=\{0\}$ if and only if $\langle\alpha,\beta\rangle=0$.
\end{lemma}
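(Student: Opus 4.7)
The plan is to analyze the $\alpha$-string through $\beta$, namely the sequence $\beta - p\alpha, \beta - (p-1)\alpha, \ldots, \beta + q\alpha$ where $p, q \geq 0$ are the largest integers for which $\beta - p\alpha$ and $\beta + q\alpha$ lie in $\Delta$. Recall the standard identity $p - q = \langle \beta, \alpha^\vee\rangle := 2\langle \beta, \alpha\rangle / \langle \alpha, \alpha\rangle$. Since $[\mathfrak{g}_\alpha, \mathfrak{g}_\beta] \subseteq \mathfrak{g}_{\alpha+\beta}$, with the bracket nonzero precisely when $\alpha + \beta \in \Delta$ or $\alpha + \beta = 0$, it suffices to show that for $\beta \in \Delta_-$ with $\beta \neq -\alpha$, one has $q = 0$ if and only if $\langle \alpha, \beta\rangle = 0$. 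The case $\beta = -\alpha$ is immediate, as both $[\mathfrak{g}_\alpha, \mathfrak{g}_{-\alpha}] = \mathbb{C} h_\alpha$ and $\langle \alpha, -\alpha\rangle = -\langle \alpha, \alpha\rangle$ are nonzero.

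The first step is to show $p = 0$. Suppose $p \geq 1$, so that $\beta - \alpha \in \Delta$, and hence $\alpha - \beta \in \Delta$ as well. Since $-\beta \in \Delta_+$ with $-\beta \neq \alpha$, the root $\alpha - \beta = \alpha + (-\beta)$ would be a positive root of height strictly greater than that of $\alpha$, contradicting the maximality of the highest root. Thus $p = 0$ and $q = -\langle \beta, \alpha^\vee\rangle$.

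The second step is to establish $|\langle \beta, \alpha^\vee\rangle| \leq 1$. By the theorem cited just before the lemma, $\alpha$ is long, so $\langle \alpha, \alpha\rangle \geq \langle \beta, \beta\rangle$, which gives $|\langle \beta, \alpha^\vee\rangle| \leq |\langle \alpha, \beta^\vee\rangle|$. Meanwhile, since $\beta \neq \pm\alpha$, Cauchy--Schwarz yields
$$\langle \beta, \alpha^\vee\rangle \cdot \langle \alpha, \beta^\vee\rangle = \frac{4\langle \alpha, \beta\rangle^2}{\langle \alpha, \alpha\rangle\langle \beta, \beta\rangle} < 4,$$
so the product of these two non-negative integers is at most $3$. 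Combining the two inequalities forces $|\langle \beta, \alpha^\vee\rangle| \leq 1$.

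Putting the steps together, $q \in \{0, 1\}$, and $q = 0$ is equivalent to $\langle \beta, \alpha^\vee\rangle = 0$, equivalently $\langle \alpha, \beta\rangle = 0$, completing the proof. I do not anticipate any serious obstacle; the only subtlety is recognizing that the highest-root hypothesis must be invoked twice, once directly (to force $p = 0$ via the height argument) and once through the derived long-root property (to bound the Cartan integer).
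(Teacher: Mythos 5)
Your proof is correct, but it takes a genuinely different route from the paper's. The paper argues the two directions separately: for the forward implication it chooses $e_{\beta}\in\mathfrak{g}_{\beta}$, $f_{\beta}\in\mathfrak{g}_{-\beta}$ with $h_{\beta}=[e_{\beta},f_{\beta}]$ and applies the Jacobi identity to $[[e_{\beta},f_{\beta}],e_{\alpha}]$, using $[f_{\beta},e_{\alpha}]=0$ (because $\alpha-\beta$ would exceed the highest root) to conclude $\langle\alpha,\beta\rangle e_{\alpha}=0$; for the converse it observes that orthogonality gives $s_{\beta}(\alpha+\beta)=\alpha-\beta$, which again is too high to be a root, so $\alpha+\beta\notin\Delta$ by $W$-invariance. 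Your root-string argument packages both directions into the single identity $p-q=\langle\beta,\alpha^{\vee}\rangle$: the highest-root property forces $p=0$ (exactly the same input, namely $\alpha-\beta\notin\Delta$), whence $q=-\langle\beta,\alpha^{\vee}\rangle$ and the equivalence $q=0\iff\langle\alpha,\beta\rangle=0$ drops out at once. This is arguably more unified, and you are also more careful than the paper about the degenerate case $\beta=-\alpha$, where $\alpha+\beta=0$ is a weight of the adjoint representation but not a root; the paper's converse only avoids this because its hypothesis excludes that case. One remark: your entire second step (the bound $\lvert\langle\beta,\alpha^{\vee}\rangle\rvert\leq 1$ via the long-root property and Cauchy--Schwarz) is superfluous, since the equivalence $q=0\iff\langle\beta,\alpha^{\vee}\rangle=0$ already follows from $q=-\langle\beta,\alpha^{\vee}\rangle$; so, contrary to your closing comment, the long-root property of $\alpha$ is never actually needed, only its maximality.
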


\begin{proof}
Suppose that $[\frak{g}_{\alpha},\frak{g}_{\beta}]=\{0\}$. Choose $h_{\beta}\in[\frak{g}_{\beta},\frak{g}_{-\beta}]$ such that $d_e\alpha(h_{\beta})=\langle\alpha,\beta\rangle$. Also, select $e_{\beta}\in\frak{g}_{\beta}$ and $f_{\beta}\in\frak{g}_{-\beta}$ such that $h_{\beta}=[e_{\beta},f_{\beta}]$. By assumption, $[e_{\beta},e_{\alpha}]=0$. Since $\alpha$ is the highest root, we also have $[f_{\beta},e_{\alpha}]=0$. Hence, $$0=[e_{\beta},[f_{\beta},e_{\alpha}]]-[f_{\beta},[e_{\beta},e_{\alpha}]]$$ $$=[[e_{\beta},f_{\beta}],e_{\alpha}]$$ $$=[h_{\beta},e_{\alpha}]$$ $$=d_e\alpha(h_{\beta})e_{\alpha}$$ $$=\langle\alpha,\beta\rangle e_{\alpha}.$$ Therefore, $\langle\alpha,\beta\rangle=0$.

Conversely, suppose that $\langle\alpha,\beta\rangle=0$. It will suffice to prove that $\alpha+\beta$ is not a weight of the adjoint representation. Since these weights are $W$-invariant, it will actually suffice to prove that $s_{\beta}(\alpha+\beta)$ is not a weight of $\frak{g}$. However, the orthogonality assumption implies that $s_{\beta}(\alpha+\beta)=\alpha-\beta$. Also, $\alpha-\beta>\alpha$, meaning that $\alpha-\beta$ cannot be a weight of $\frak{g}$.
\end{proof}

Now, suppose that $$\beta=\sum_{\gamma\in\Pi}a_{\gamma\beta}\gamma,$$ $a_{\gamma\beta}\in\mathbb{Z}_{\leq 0}$, is the expression of $\beta$ as a linear combination of simple roots. Since $\langle\alpha,\gamma\rangle\geq 0$ for all $\gamma\in\Pi$, we see that $\langle\alpha,\beta\rangle=0$ if and only if $\langle\alpha,\gamma\rangle=0$ whenever $a_{\gamma\beta}\neq 0$. In other words, $\langle\alpha,\beta\rangle=0$ if and only if $\beta$ is a linear combination of those simple roots orthogonal to $\alpha$.

Accordingly, let us set $$\Xi:=\{\beta\in\Pi:\langle\alpha,\beta\rangle=0\}.$$ We have shown that $Q=P_{\Xi}$, the parabolic subgroup of $G$ determined by the simple roots in $\Xi$.

We thus have the below result concerning the $G$-variety structure of $\mathbb{P}(\Theta_{\text{min}})$.

\begin{theorem}
There is a $G$-variety isomorphism $\mathbb{P}(\Theta_{\text{min}})\cong G/P_{\Xi}$.
\end{theorem}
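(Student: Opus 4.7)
The plan is to leverage everything done in the preceding lemmas to identify the stabilizer $Q = C_G([e_\alpha])$ with the standard parabolic $P_\Xi$ on the nose, and then read off the theorem from the orbit-stabilizer isomorphism. First, since $\Theta_{\text{min}}$ is a single $G$-orbit and the $G$-action commutes with the $\mathbb{C}^*$-scaling used to form $\mathbb{P}(\Theta_{\text{min}})$, the induced $G$-action on $\mathbb{P}(\Theta_{\text{min}})$ is transitive, so taking $[e_\alpha]$ as basepoint yields a $G$-variety isomorphism $\mathbb{P}(\Theta_{\text{min}}) \cong G/Q$. It therefore suffices to prove $Q = P_\Xi$.

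The next step is to match Lie algebras. From the computation immediately preceding the theorem we have
$$\mathfrak{q} = \mathfrak{h} \oplus \bigoplus_{\{\beta \in \Delta \,:\, [\mathfrak{g}_\alpha,\mathfrak{g}_\beta] = \{0\}\}} \mathfrak{g}_\beta.$$
Because $\alpha$ is the highest root, $\alpha+\beta$ fails to be a root for every $\beta\in\Delta_+$, and hence every positive root space lies in $\mathfrak{q}$. By the lemma just proven, a negative root $\beta$ satisfies $[\mathfrak{g}_\alpha,\mathfrak{g}_\beta]=\{0\}$ precisely when $\langle\alpha,\beta\rangle=0$; writing $\beta=\sum_{\gamma\in\Pi} a_{\gamma\beta}\gamma$ with $a_{\gamma\beta}\leq 0$ and using the fact that $\langle\alpha,\gamma\rangle\geq 0$ for each simple $\gamma$ (the third bullet of the theorem on the highest root), this is in turn equivalent to $\beta$ being a $\mathbb{Z}$-linear combination of the simple roots in $\Xi$. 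So the negative root spaces occurring in $\mathfrak{q}$ are exactly those indexed by the negative roots of the Levi subsystem generated by $\Xi$, which means $\mathfrak{q}$ coincides with $\mathfrak{p}_\Xi$.

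Finally, I would close the argument by promoting the Lie algebra equality to a group equality. Both $Q$ and $P_\Xi$ are Zariski-closed subgroups of $G$ with the same Lie algebra, so they share an identity component; since parabolic subgroups of a connected linear algebraic group are connected, and $Q$ is parabolic (as $G/Q\cong\mathbb{P}(\Theta_{\text{min}})$ is projective), one obtains $Q=Q_0=(P_\Xi)_0=P_\Xi$, which gives the desired $G$-variety isomorphism. The main subtlety here is really bookkeeping rather than mathematics: I want to make sure to invoke the projectivity of $\mathbb{P}(\Theta_{\text{min}})$ to get $Q$ parabolic and hence connected, because without that input a Lie algebra match would only pin down the identity component and leave ambiguity in the component group of $Q$.
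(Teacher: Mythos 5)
Your proposal is correct and follows essentially the same route as the paper: compute $\mathfrak{q}=\Lie(Q)$ via the centralizer $C_{\mathfrak{g}}(e_{\alpha})$, use the highest-root properties and the orthogonality lemma to identify $\mathfrak{q}$ with $\mathfrak{p}_{\Xi}$, and conclude $Q=P_{\Xi}$. The only difference is that you make explicit the final passage from the Lie algebra equality to the group equality (via projectivity of $G/Q$, hence parabolicity and connectedness of $Q$), a step the paper leaves implicit; this is a welcome bit of extra care rather than a divergence in method.
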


Let us now address the $G$-variety structure of $\Theta_{\text{min}}$. To this end, we denote by $\mathcal{L}\xrightarrow{\pi}\mathbb{P}(\mathfrak{g})$ the tautological line bundle over $\mathbb{P}(\mathfrak{g})$. Recall that for $\xi\in\mathfrak{g}\setminus\{0\}$, we have $\pi^{-1}([\xi])=\vspan_{\mathbb{C}}\{\xi\}$. Furthermore, the tautological bundle is $G$-equivariant, with the $G$-action on the total space $\mathcal{L}$ given by $$g:([\xi],v)\mapsto([\Adj_g(\xi)],\Adj_g(v)),$$ $g\in G$, $\xi\in\mathfrak{g}\setminus\{0\}$, $v\in\vspan_{\mathbb{C}}\{\xi\}$.

Let $\mathcal{E}\xrightarrow{\varphi}\mathbb{P}(\Theta_{\text{min}})$ denote the pullback of $\mathcal{L}$ along the inclusion $\mathbb{P}(\Theta_{\text{min}})\hookrightarrow\mathbb{P}(\mathfrak{g})$. Note that $\mathcal{E}$ inherits from $\mathcal{L}$ the structure of a $G$-equivariant line bundle over $\mathbb{P}(\Theta_{\text{min}})$. Furthermore, $\Theta_{\text{min}}$ $G$-equivariantly (and also $\mathbb{C}^*$-equivariantly) includes into $\mathcal{E}$ as a smooth open subvariety, namely the complement $\mathcal{E}^*$ of the zero-section. Accordingly, we will describe $\Theta_{\text{min}}$ by more closely examining $\mathcal{E}$.

Since $\mathbb{P}(\Theta_{\text{min}})$ is the homogeneous $G$-variety $G/P_{\Xi}$, we may exhibit $\mathcal{E}$ as an associated bundle for the one-dimensional $P_{\Xi}$-representation $\varphi^{-1}([e_{\alpha}])=\mathfrak{g}_{\alpha}$. More precisely, let $G\times_{P_{\Xi}}\mathfrak{g}_{\alpha}$ denote the quotient of $G\times\mathfrak{g}_{\alpha}$ by the equivalence relation $$(gp,v)\sim(g,\Adj_p(v)),$$ $p\in P_{\Xi}$, $g\in G$, $v\in\mathfrak{g}_{\alpha}$. Consider the map $G\times_{P_{\Xi}}\mathfrak{g}_{\alpha}\rightarrow G/P_{\Xi}$ given by projection from the first component, whose fibres are then naturally complex vector spaces. The bundle $G\times_{P_{\Xi}}\mathfrak{g}_{\alpha}\rightarrow G/P_{\Xi}$ is $G$-equivariant by virtue of the left-multiplicative $G$-action on the first component of $G\times_{P_{\Xi}}\mathfrak{g}_{\alpha}$.

We have an isomorphism $\mathcal{E}\cong G\times_{P_{\Xi}}\mathfrak{g}_{\alpha}$ of $G$-equivariant holomorphic line bundles over $G/P_{\Xi}$, where we are regarding $\mathcal{E}$ as a line bundle over $G/P_{\Xi}$. We therefore have the below description of $\Theta_{\text{min}}$.

\begin{theorem}
There is an isomorphism of $G$-equivariant holomorphic principal $\mathbb{C}^*$-bundles over $G/P_{\Xi}$ between $\Theta_{\text{min}}$ and $(G\times_{P_{\Xi}}\mathfrak{g}_{\alpha})^*$.
\end{theorem}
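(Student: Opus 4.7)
The plan is to make explicit the $G$-equivariant line bundle isomorphism $\mathcal{E} \cong G \times_{P_\Xi} \mathfrak{g}_\alpha$ asserted just before the theorem, and then restrict it to the complements of the zero sections. The structural input is the preceding identification $\mathbb{P}(\Theta_{\text{min}}) \cong G/P_\Xi$: under it, the fiber of $\mathcal{E}$ over $[e_\alpha]$ is the one-dimensional $P_\Xi$-representation $\mathfrak{g}_\alpha$ (with $P_\Xi$ acting via the adjoint action, which preserves $\mathfrak{g}_\alpha$ because $P_\Xi$ stabilizes the line $[e_\alpha] \in \mathbb{P}(\mathfrak{g})$).

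First I would define
$$F: G \times \mathfrak{g}_\alpha \longrightarrow \mathcal{E}, \qquad (g,v) \longmapsto \bigl([\Adj_g(e_\alpha)],\, \Adj_g(v)\bigr),$$
which is well-defined because $\mathfrak{g}_\alpha = \vspan_\mathbb{C}\{e_\alpha\}$, so $\Adj_g(v)$ lies in $\vspan_\mathbb{C}\{\Adj_g(e_\alpha)\}$, the fiber of $\mathcal{L}$ over $[\Adj_g(e_\alpha)]$. Next I would check that $F$ descends to $G \times_{P_\Xi} \mathfrak{g}_\alpha$: for $p \in P_\Xi$, the line $\mathfrak{g}_\alpha$ is $\Adj_p$-stable, so $[\Adj_{gp}(e_\alpha)] = [\Adj_g(\Adj_p(e_\alpha))] = [\Adj_g(e_\alpha)]$, and $\Adj_{gp}(v) = \Adj_g(\Adj_p(v))$, giving $F(gp,v) = F(g, \Adj_p(v))$. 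The resulting map $\bar F: G \times_{P_\Xi} \mathfrak{g}_\alpha \to \mathcal{E}$ is manifestly $G$-equivariant, holomorphic, and fiberwise complex-linear; it covers the basepoint identification $G/P_\Xi \xrightarrow{\cong} \mathbb{P}(\Theta_{\text{min}})$, $[g] \mapsto [\Adj_g(e_\alpha)]$. Since $\bar F$ is nonzero on each fiber, fiberwise linearity forces it to be a fiberwise isomorphism, hence a $G$-equivariant line bundle isomorphism.

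Finally I would restrict to complements of the zero sections. Recall that $\Theta_{\text{min}}$ sits in $\mathcal{E}$ as the open subvariety $\mathcal{E}^*$ via $\xi \mapsto ([\xi], \xi)$, while the complement of the zero section on the other side is by definition $(G \times_{P_\Xi} \mathfrak{g}_\alpha)^*$. Because $\bar F$ is linear and nonvanishing on each fiber, it sends $\mathcal{E}^*$ bijectively onto $(G \times_{P_\Xi} \mathfrak{g}_\alpha)^*$ and intertwines the fiberwise $\mathbb{C}^*$-actions. The resulting map is therefore an isomorphism of $G$-equivariant holomorphic principal $\mathbb{C}^*$-bundles over $G/P_\Xi$.

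The only real subtlety, and the step to be careful with, is checking that the two $\mathbb{C}^*$-actions agree: on the $\Theta_{\text{min}}$-side the action is scaling in $\mathfrak{g}$ (identified in \ref{Action}), while on $(G \times_{P_\Xi} \mathfrak{g}_\alpha)^*$ it is fiberwise scaling. Under $\bar F$, scaling $v \in \mathfrak{g}_\alpha$ by $\lambda \in \mathbb{C}^*$ corresponds to scaling $\Adj_g(v)$ by $\lambda$, which is precisely the scaling action on the image point in $\mathfrak{g}$; hence the two actions match and the claim follows.
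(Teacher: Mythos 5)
Your proposal is correct and follows the same route as the paper: realize $\Theta_{\text{min}}$ as the complement $\mathcal{E}^*$ of the zero section in the pullback of the tautological bundle, identify $\mathcal{E}$ with the associated bundle $G\times_{P_{\Xi}}\mathfrak{g}_{\alpha}$, and restrict. The only difference is that you write out explicitly the bundle map $(g,v)\mapsto\bigl([\Adj_g(e_{\alpha})],\Adj_g(v)\bigr)$ and the verification that the two $\mathbb{C}^*$-actions agree, which the paper simply asserts; this is a welcome amplification rather than a departure.
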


\subsection{The $T$-Equivariant Cohomology of $\Theta_{\text{min}}$}
Let us use the description of $\Theta_{\text{min}}$ provided in \ref{Description} to compute $H_T^*(\Theta_{\text{min}})$. To this end, we have the equivariant Thom-Gysin sequence $$\cdots\rightarrow H_T^{i-2}(G/P_{\Xi})\rightarrow H_T^{i}(G\times_{P_{\Xi}}\mathfrak{g}_{\alpha})\rightarrow H_T^i((G\times_{P_{\Xi}}\mathfrak{g}_{\alpha})^*)\rightarrow\cdots$$ associated with the zero-section $G/P_{\Xi}$ in $G\times_{P_{\Xi}}\mathfrak{g}_{\alpha}$ and its complement $(G\times_{P_{\Xi}}\mathfrak{g}_{\alpha})^*$. We can say considerably more about this sequence in our context, but it will require a brief computation of the $T$-equivariant Euler class $\text{Eul}_T(N)\in H^2_T(G/P_{\Xi})$ of the normal bundle $N\cong G\times_{P_{\Xi}}\mathfrak{g}_{\alpha}$ of the zero-section in $G\times_{P_{\Xi}}\mathfrak{g}_{\alpha}$. Indeed, we will give the restriction $\text{Eul}_T(G\times_{P_{\Xi}}\mathfrak{g}_{\alpha})\vert_{[w]}\in H_T^2(\text{pt})\cong\Sym^1(\Hom(T,\Un(1))\otimes_{\mathbb{Z}}\mathbb{Q})$ to each fixed point $[w]\in W/W_{P_{\Xi}}\cong (G/P_{\Xi})^T$.

\begin{lemma}\label{Euler}
If $w\in W$, then $\text{Eul}_T(G\times_{P_{\Xi}}\mathfrak{g}_{\alpha})\vert_{[w]}=w\cdot\alpha$.
\end{lemma}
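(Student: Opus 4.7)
The plan is to reduce the claim to a single weight computation by invoking the standard principle that, for a $T$-equivariant complex line bundle $L\to X$ and an isolated fixed point $x\in X^T$, the restriction $\text{Eul}_T(L)\vert_x \in H_T^2(\text{pt})$ equals the character by which $T$ acts on the one-dimensional fiber $L_x$, under the identification $H_T^2(\text{pt})\cong\Hom(T,\Un(1))\otimes_{\mathbb{Z}}\mathbb{Q}$ implicit in the statement of the lemma. The entire task thus reduces to identifying the $T$-character on the fiber of $G\times_{P_\Xi}\mathfrak{g}_\alpha$ over $[w]$.

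To carry this out, I would fix a lift $k\in N_K(T)$ of $w$, so that $[k]=[w]\in G/P_\Xi$ is the fixed point in question, and work with a generic fiber element $[k,v]$, $v\in\mathfrak{g}_\alpha$. For $t\in T$ I rewrite $t\cdot[k,v]=[tk,v]=[k(k^{-1}tk),v]$. Since $k\in N_K(T)$, the element $k^{-1}tk$ lies in $T\subseteq H\subseteq P_\Xi$, so the defining equivalence relation on the associated bundle applies and produces $[k,\Adj_{k^{-1}tk}(v)]$. Because $v\in\mathfrak{g}_\alpha$ and $H$ acts on $\mathfrak{g}_\alpha$ through the character $\alpha$, this equals $\alpha(k^{-1}tk)\cdot[k,v]$. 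By the Weyl-action convention used throughout the paper, and made explicit in the proof of Lemma \ref{Weyl}, the map $t\mapsto\alpha(k^{-1}tk)$ is precisely $w\cdot\alpha$. Combining this with the Euler-class-equals-weight principle then yields $\text{Eul}_T(G\times_{P_\Xi}\mathfrak{g}_\alpha)\vert_{[w]}=w\cdot\alpha$.

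I do not expect a substantive obstacle; the argument is essentially a short bookkeeping exercise once the right framework is invoked. The one step deserving care is the manipulation $[tk,v]=[k(k^{-1}tk),v]=[k,\Adj_{k^{-1}tk}(v)]$, which relies on $P_\Xi$ acting on $G$ from the right in the construction $G\times_{P_\Xi}\mathfrak{g}_\alpha$, consistent with how the associated bundle was set up in \ref{Description}. One should also confirm the sign convention identifying $H_T^2(\text{pt})$ with $\Hom(T,\Un(1))\otimes_{\mathbb{Z}}\mathbb{Q}$, so as to be certain the answer is $w\cdot\alpha$ rather than its negative.
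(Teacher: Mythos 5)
Your proposal is correct and follows essentially the same route as the paper: restrict the Euler class to the fiber over $[w]$, then compute the $T$-weight of that fiber via the manipulation $t\cdot[(k,v)]=[(k(k^{-1}tk),v)]=[(k,\alpha(k^{-1}tk)v)]=(w\cdot\alpha)(t)[(k,v)]$. The paper's proof is exactly this computation, preceded by the observation that $i_{[w]}^*$ of the Euler class is the Euler class of the restricted (i.e.\ fiber) bundle.
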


\begin{proof}
Let $i_{[w]}:\{[w]\}\hookrightarrow G/P_{\Xi}$ be the inclusion, and let $i_{[w]}^*:H^*_T(G/P_{\Xi})\rightarrow H^*_T(\text{pt})$ be the associated map on equivariant cohomology. Note that $$\text{Eul}_T(G\times_{P_{\Xi}}\mathfrak{g}_{\alpha})\vert_{[w]}=i_{[w]}^*(\text{Eul}_T(G\times_{P_{\Xi}}\mathfrak{g}_{\alpha}))$$ $$=\text{Eul}_T((i_{[w]})^*(G\times_{P_{\Xi}}\mathfrak{g}_{\alpha}))$$ $$=\text{Eul}_T((G\times_{P_{\Xi}}\mathfrak{g}_{\alpha})_{[w]}),$$ where $(G\times_{P_{\Xi}}\mathfrak{g}_{\alpha})_{[w]}$ is the fibre over $[w]$. Now, choose a representative $k\in N_K(T)$ of $w$, noting that any element of this fibre is of the form $[(k,\xi)]$, $\xi\in\mathfrak{g}_{\alpha}$. Note that for $t\in T$, $$t\cdot[(k,\xi)]=[(tk,\xi)]=[(k(k^{-1}tk),\xi)]$$ $$=[(k,(k^{-1}tk)\cdot\xi)]$$ $$=[(k,\alpha(k^{-1}tk)\xi)]$$ $$=(w\cdot\alpha)(t)[(k,\xi)].$$ Hence, $w\cdot\alpha=\text{Eul}_T((G\times_{P_{\Xi}}\times\mathfrak{g}_{\alpha})_{[w]})=\text{Eul}_T(G\times_{P_{\Xi}}\mathfrak{g}_{\alpha})\vert_{[w]}$.
\end{proof}

In particular, the image of $\text{Eul}_T(G\times_{P_{\Xi}}\mathfrak{g}_{\alpha})$ in $H_T^*((G/P_{\Xi})^T)$ is non-zero. Since restriction gives an inclusion of $H_T^*(G/P_{\Xi})$ into $H_T^*((G/P_{\Xi})^T)$ as a subalgebra, and since $H_T^*((G/P_{\Xi})^T)$ (a direct sum of polynomial rings) has no zero-divisors, we conclude that $\text{Eul}_T(G\times_{P_{\Xi}}\mathfrak{g}_{\alpha})$ is not a zero-divisor in $H_T^*(G/P_{\Xi})$. It follows that our Thom-Gysin sequence splits into the short-exact sequences $$0\rightarrow H_T^{i-2}(G/P_{\Xi})\rightarrow H_T^{i}(G\times_{P_{\Xi}}\mathfrak{g}_{\alpha})\rightarrow H_T^i((G\times_{P_{\Xi}}\mathfrak{g}_{\alpha})^*)\rightarrow 0.$$ (For a proof, see \cite{Yang-Mills}.)

For a second useful refinement of our Thom-Gysin sequence, we note that restriction to the zero-section gives a $T$-equivariant homotopy equivalence between $G\times_{P_{\Xi}}\mathfrak{g}_{\alpha}$ and $G/P_{\Xi}$. It follows that the associated restriction map $H_T^*(G\times_{P_{\Xi}}\mathfrak{g}_{\alpha})\rightarrow H_T^*(G/P_{\Xi})$ is an isomorphism. Using this isomorphism, we shall replace $H_T^*(G\times_{P_{\Xi}}\mathfrak{g}_{\alpha})$ in our short-exact sequences to obtain
$$0\rightarrow H_T^{i-2}(G/P_{\Xi})\rightarrow H_T^{i}(G/P_{\Xi})\rightarrow H_T^i((G\times_{P_{\Xi}}\mathfrak{g}_{\alpha})^*)\rightarrow 0.$$ The map $H_T^{i-2}(G/P_{\Xi})\rightarrow H_T^{i}(G/P_{\Xi})$ is multiplication by $\text{Eul}_T(G\times_{P_{\Xi}}\mathfrak{g}_{\alpha})$ (see \cite{Bifet}, for instance). Furthermore, the map $H_T^{i}(G/P_{\Xi})\rightarrow H_T^i((G\times_{P_{\Xi}}\mathfrak{g}_{\alpha})^*)$ is the map $\psi^*$ on equivariant cohomology induced by the projection $\psi:(G\times_{P_{\Xi}}\mathfrak{g}_{\alpha})^*\rightarrow G/P_{\Xi}$. (This follows from the fact that the bundle projection $G\times_{P_{\Xi}}\mathfrak{g}_{\alpha}\rightarrow G/P_{\Xi}$ and zero-section $G/P_{\Xi}\rightarrow G\times_{P_{\Xi}}\mathfrak{g}_{\alpha}$ give inverse maps on equivariant cohomology.)

The above analysis yields two immediate corollaries. Firstly, the $T$-equivariant Betti numbers $b_T^i(\Theta_{\text{min}})$ of $\Theta_{\text{min}}$ are given by $$b^i_T(\Theta_{\text{min}})=b^i_T(G/P_{\Xi})-b^{i-2}_T(G/P_{\Xi}).$$ Secondly, $\psi^*:H_T^*(G/P_{\Xi})\rightarrow H_T^*(\Theta_{\text{min}})$ is a surjective graded algebra morphism. Its kernel is $\langle\text{Eul}_T(G\times_{P_{\Xi}}\mathfrak{g}_{\alpha})\rangle$, the ideal of $H_T^*(G/P_{\Xi})$ generated by the equivariant Euler class $\text{Eul}_T(G\times_{P_{\Xi}}\mathfrak{g}_{\alpha})\in H^2_T(G/P_{\Xi})$. In particular, there is a graded algebra isomorphism $$H_T^*(\Theta_{\text{min}})\cong H_T^*(G/P_{\Xi})/\langle\text{Eul}_T(G\times_{P_{\Xi}}\mathfrak{g}_{\alpha})\rangle.$$

Using Lemma \ref{Euler} and Theorem \ref{Equivariant Cohomology}, and noting that $W_{P_{\Xi}}=:W_{\Xi}$ is the subgroup of $W$ generated by the reflections $\{s_{\beta}\}_{\beta\in\Xi}$, we obtain the below more explicit description of $H_T^*(\Theta_{\text{min}})$.

\begin{theorem}\label{Minimal Cohomology}
$H_T^*(\Theta_{\text{min}})$ is isomorphic to the quotient of \begin{align}\{& f\in\Map(W/W_{\Xi},H_{T}^*(\text{pt})):(w\cdot\beta)\vert(f([w])-f([ws_{\beta}]))\nonumber\\ & \forall{w\in W},\beta\in\Delta_{-},\text{ }\langle\alpha,\beta\rangle\neq 0\}\nonumber \end{align}
by the ideal generated by the map $W/W_{\Xi}\rightarrow H_T^*(\text{pt})$, $[w]\mapsto w\cdot\alpha$.
\end{theorem}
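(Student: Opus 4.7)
The plan is to assemble three earlier ingredients: the quotient description of $H_T^*(\Theta_{\text{min}})$ coming from the equivariant Thom-Gysin sequence, the GKM-type description of $H_T^*(G/P_\Xi)$ from Theorem \ref{Equivariant Cohomology}, and the fixed-point formula for the equivariant Euler class in Lemma \ref{Euler}. The discussion immediately preceding the statement has already combined the Thom-Gysin sequence with the fact that $\text{Eul}_T(G\times_{P_\Xi}\mathfrak{g}_\alpha)$ is a non-zero-divisor (its restrictions $w\cdot\alpha$ to fixed points are all nonzero) to produce a graded algebra isomorphism $H_T^*(\Theta_{\text{min}}) \cong H_T^*(G/P_\Xi) / \langle\text{Eul}_T(G\times_{P_\Xi}\mathfrak{g}_\alpha)\rangle$. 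It therefore only remains to translate both sides of this isomorphism into the explicit GKM data appearing in the theorem.

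For the numerator I would apply Theorem \ref{Equivariant Cohomology}(iii) with $P=P_\Xi$. The only nontrivial step is identifying the set $\Delta\setminus\Delta_{P_\Xi}$ of roots indexing the GKM edge conditions, and this has been computed in Section \ref{Description}: one has $\Delta_{P_\Xi} = \Delta_+ \cup \{\beta\in\Delta_- : \langle\alpha,\beta\rangle = 0\}$, so
\[
\Delta\setminus\Delta_{P_\Xi} = \{\beta\in\Delta_- : \langle\alpha,\beta\rangle \neq 0\}.
\]
Substituting this (together with the identification $W_{P_\Xi}=W_\Xi$) directly into Theorem \ref{Equivariant Cohomology}(iii) reproduces exactly the map-space in the theorem statement.

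For the ideal, Lemma \ref{Euler} asserts that $\text{Eul}_T(G\times_{P_\Xi}\mathfrak{g}_\alpha)$ restricts at each fixed point $[w]$ to $w\cdot\alpha$. Since the GKM restriction $H_T^*(G/P_\Xi)\hookrightarrow\Map((G/P_\Xi)^T, H_T^*(\text{pt}))$ is injective, the Euler class is identified under that embedding with the function $[w]\mapsto w\cdot\alpha$, and quotienting by the principal ideal it generates matches the quotient in the theorem. Two small well-definedness points remain to verify: first, that $[w]\mapsto w\cdot\alpha$ descends to $W/W_\Xi$, which holds because $s_\beta\cdot\alpha=\alpha$ for all $\beta\in\Xi$ by the very definition of $\Xi$; and second, that this function actually lies in the GKM submodule, i.e.\ that $(w\cdot\gamma)\mid(w\cdot\alpha - ws_\gamma\cdot\alpha)$ for each relevant $\gamma$, which is immediate from the identity $\alpha - s_\gamma\cdot\alpha \in \mathbb{Z}\gamma$. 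No step here is a genuine obstacle, since the real content has been packaged into Lemma \ref{Euler}, Theorem \ref{Equivariant Cohomology}, and the Thom-Gysin splitting established above; the principal care needed is bookkeeping, namely tracking how the GKM embedding identifies the equivariant Euler class with the explicit function $[w]\mapsto w\cdot\alpha$ so that the two ideals correspond under the isomorphism.
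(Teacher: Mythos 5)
Your proposal is correct and follows essentially the same route as the paper: the paper likewise obtains $H_T^*(\Theta_{\text{min}})\cong H_T^*(G/P_{\Xi})/\langle\text{Eul}_T(G\times_{P_{\Xi}}\mathfrak{g}_{\alpha})\rangle$ from the split Thom--Gysin sequence and then substitutes the GKM description of Theorem \ref{Equivariant Cohomology} (with $\Delta\setminus\Delta_{P_{\Xi}}=\{\beta\in\Delta_-:\langle\alpha,\beta\rangle\neq 0\}$ and $W_{P_{\Xi}}=W_{\Xi}$) together with the fixed-point restrictions $w\cdot\alpha$ from Lemma \ref{Euler}. Your two added well-definedness checks are harmless extras that the paper leaves implicit.
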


\subsection{An Example}
Let us compute the equivariant cohomology of the minimal nilpotent orbit of $G=\SL_2(\mathbb{C})$. To this end, let $T\subseteq G$ be the compact real form of the standard maximal torus of $G$. Note that $\Delta=\{-2,2\}\subseteq\mathbb{Z}\cong\Hom(T,\Un(1))$ is the resulting collection of roots. Letting $B\subseteq G$ be the Borel subgroup of upper-triangular matrices, we find that $\alpha=2$ is the highest root. It is not orthogonal to any of the simple roots, so that $\Xi=\emptyset$. Hence, $P_{\Xi}=B$ and $\Delta_{P_{\Xi}}=\{2\}$. The Weyl group $W$ is $\mathbb{Z}/2\mathbb{Z}$, and the generator acts by negation on the weight lattice. The subgroup $W_{\Xi}$ is trivial. In particular, $G/P_{\Xi}$ has two $T$-fixed points.

Since $\alpha$ is identified with $2x\in\mathbb{Q}[x]\cong H_T^*(\text{pt})$, Theorem \ref{Equivariant Cohomology} implies that $H_T^*(G/P_{\Xi})$ includes into $H_T^*(\text{pt})^{\oplus 2}\cong\mathbb{Q}[x]^{\oplus 2}$ as the subalgebra $$H_T^*(G/P_{\Xi})\cong\{(f_1(x),f_2(x))\in\mathbb{Q}[x]^{\oplus 2}:x\vert(f_1(x)-f_2(x))\}$$ $$=\{(f_1(x),f_2(x))\in\mathbb{Q}[x]^{\oplus 2}:f_1(0)=f_2(0)\}.$$ Indeed, we have recovered the $\Un(1)$-equivariant cohomology of the two-sphere with the rotation action of $\Un(1)$.

Lemma \ref{Euler} tells us that $\text{Eul}_T(N)=(2x,-2x)$ when included into $\mathbb{Q}[x]^{\oplus 2}$. Hence, $$H_T^*(\Theta_{\text{min}})\cong\frac{\{(f_1(x),f_2(x))\in\mathbb{Q}[x]^{\oplus 2}:f_1(0)=f_2(0)\}}{\langle(x,-x)\rangle}.$$

Note that this is generated as a $\mathbb{Q}$-algebra by $y:=[(x,0)]$. The relation is $y^2=0$, so that $$H_T^*(\Theta_{\text{min}})\cong\mathbb{Q}[y]/\langle y^2\rangle,$$ with $y$ an element of grading degree two.

We remark that this is consistent with Corollary \ref{flagvariety}. Indeed, if $G=\SL_2(\mathbb{C})$, then $\Theta_{\text{min}}=\Theta_{\text{reg}}$. Hence, $H_T^*(\Theta_{\text{min}})=H_T^*(\Theta_{\text{reg}})$. Corollary \ref{flagvariety} tells us that the latter is isomorphic to the ordinary cohomology of $G/B\cong\mathbb{P}^1$.

\end{document}